\g@addto@macro{\UrlBreaks}{%
\do\/\do\a\do\b\do\c\do\d\do\e\do\f%
\do\g\do\h\do\i\do\j\do\k\do\l\do\m%
\do\n\do\o\do\p\do\q\do\r\do\s\do\t%
\do\u\do\v\do\w\do\x\do\y\do\z%
\do\A\do\B\do\C\do\D\do\E\do\F\do\G%
\do\H\do\I\do\J\do\K\do\L\do\M\do\N%
\do\O\do\P\do\Q\do\R\do\S\do\T\do\U%
\do\V\do\W\do\X\do\Y\do\Z}
\newcommand{\@endstuff}{\par\vspace{\baselineskip}\noindent\small
\begin{tabular}{@{}l}\scshape Tianyi Feng (corresponding author), University of St Andrews, Scotland\\\textit{E-mail address:} \href{mailto:tf66@st-andrews.ac.uk}{\color{OliveGreen}{\texttt{tf66@st-andrews.ac.uk}}}\end{tabular} \\

\begin{tabular}{@{}l}\scshape Jonathan M. Fraser, University of St Andrews, Scotland\\\textit{E-mail address:} \href{mailto:jmf32@st-andrews.ac.uk}{\color{OliveGreen}{\texttt{jmf32@st-andrews.ac.uk}}}\end{tabular}}
\renewcommand{\geq}{\geqslant}
\renewcommand{\leq}{\leqslant}
\newcommand{\leqs}{\lesssim}
\newcommand{\geqs}{\gtrsim}
\renewcommand{\epsilon}{\varepsilon}
\newcommand{\calpha}{C^\alpha [0,1]}
\newcommand{\lilpha}{c^\alpha [0,1]}
\newcommand{\cmega}{C^\omega[0,1]}
\newcommand{\lilmega}{c^\omega[0,1]}
\newcommand{\bb}[1]{\mathbb{#1}}
\newcommand{\fx}[2]{(#2 , #1 (#2))}
\newcommand{\tilf}{\tilde{f}}
\newcommand{\F}[2]{\frac{|f(#1)-f(#2)|}{|#1-#2|^\alpha}}
\newcommand{\Fo}[2]{\frac{|f(#1)-f(#2)|}{\omega(|#1-#2|)}}
\newcommand{\tFo}[2]{\frac{|\tilde{f}(#1)-\tilde{f}(#2)|}{\omega(|#1-#2|)}}
\newcommand{\Ho}[2]{\frac{|h(#1)-h(#2)|}{\omega(|#1-#2|)}}
\newcommand{\floor}[1]{\left\lfloor #1 \right\rfloor}
\newcommand{\ad}{\dim_{\textup{A}}}
\newcommand{\ubd}{\overline{\dim}_{\textup{B}}}
\newcommand{\epk}{\epsilon_k}
\newtheorem{thm}{Theorem}[section]
\newtheorem{lma}[thm]{Lemma}
\newtheorem{cor}[thm]{Corollary}
\newtheorem{prop}[thm]{Proposition}
\newtheorem{ques}[thm]{Question}
\theoremstyle{definition}
\numberwithin{equation}{section} 
\title{The Assouad spectrum and dimension of typical graphs}
\renewcommand\footnotemark{}
\author{Tianyi Feng and Jonathan M. Fraser \thanks{The authors were financially supported by a \emph{Leverhulme Trust Research Project Grant} (RPG-2023-281). JMF was also funded by \emph{EPSRC Open Fellowship} (EP/Z533440/1).}}
\date{}
\begin{document}

\maketitle

\begin{abstract}
We investigate the Assouad spectrum and dimension  of    graphs of  functions lying in certain Banach spaces.  We find the typical values in the sense of Baire category, proving that these values are often as large as possible, given the constraints of the particular function space.  For example, we demonstrate that in the little $\alpha$-Hölder spaces, a typical graph has Assouad dimension $2$ and Assouad spectrum $\min\{2,2-\frac{\alpha-\theta}{1-\theta}\}$; whereas   in the space associated with modulus of continuity $t(1+|\log t|)$, a typical graph   has Assouad dimension $2$ but quasi-Assouad dimension (and Assouad spectrum) equal to $1$.
\\ \\ 
\emph{Mathematics Subject Classification}: primary: 26A21, 28A80 ; secondary: 26A15, 26A16.
\\
\emph{Key words and phrases}: Baire category, Assouad dimension, quasi-Assouad dimension, Assouad spectrum, modulus of continuity, Hölder functions, graphs of continuous functions.
\end{abstract}

\tableofcontents

\section{Introduction}

\subsection{Overview}

The fractal dimensions of the graphs of continuous functions have long been a subject of interest with many particular examples playing a prominent role, such as the Weierstrass function, the Takagi function, or random processes such as Brownian motion.  Computing the dimensions of a specific graph is often very challenging and an alternative approach is to instead consider the `generic answer' within a specific function space.  This is the viewpoint of this paper and we adopt the Baire category approach and study the typical dimensions of graphs in this context.  This problem is also well-studied, but not in the setting of the Assouad spectrum which is a family of dimensions which interpolates between the upper box dimension and the (quasi-)Assouad dimension.  We find that the extra information coming from the Assouad spectrum is useful in providing a  nuanced answer to such questions.

 Denote the set of real-valued continuous functions on $[0,1]$ by $C^0[0,1]$ and the infinity norm by $||\cdot||_\infty$. Write $G_f = \{(x,f(x)) \ | \ x \in [0,1]\}\subseteq \bb{R}^2$ for the graph of a function $f: [0,1] \to \bb{R}$. It has been proven in \cite{PACKING, box, Haus} that a (Baire) typical $f \in (C^0[0,1], ||\cdot||_\infty)$ satisfies
\[
{\dim_{\textup{H}}} G_f = {\underline{\dim}_{\textup{B}}} G_f = 1 < 2 = {\dim_{\textup{P}}} G_f = \ubd G_f,
\]
where ${\dim_{\textup{H}}} G_f, {\underline{\dim}_{\textup{B}}} G_f, {\dim_{\textup{P}}} G_f$, and $\ubd G_f$ refer to the Hausdorff, lower box, packing, and upper box dimensions of $G_f$ respectively. See \cite{KJF} for precise definitions and properties of these dimensions. Observe that $G_f$ has full packing and upper box dimensions, in the sense that they are equal to the dimension of the ambient space $\bb{R}^2$. 

The focus of this paper will be on the \emph{Assouad dimension} and the \emph{Assouad spectrum}. Since the Assouad dimension and spectra of a set are at least its upper box dimension, it follows trivially that the Assouad dimension and spectra of a typical continuous graph are full as well. However, this does not necessarily apply to functions under more specific continuity constraints. For example, given $\alpha \in (0,1)$, a function $f$ is $\alpha$-H\"older continuous if 
\[
\sup_{x,y \in [0,1]} \F{x}{y} < \infty
\]
and a result in \cite{KJF} indicates that $\ubd G_f \leq 2-\alpha < 2$ for all such functions. Despite this, we cannot obtain any non-trivial bound for the Assouad dimension. In fact, we shall see below that a typical function in the little Hölder space, defined below in Section \ref{spaceDef}, has a graph with Assouad dimension 2; see Theorem \ref{adTyp}. Since the Assouad spectrum lives in between the box and Assouad dimensions, it is then an interesting question to consider the Assouad spectrum of a typical graph. In particular, we will show that a typical graph in the little Hölder space has Assouad spectrum $2-\frac{\alpha-\theta}{1-\theta}$ for $\theta \leq \alpha$; see Theorem \ref{specTyp}.

The paper is structured as follows: Section 1 is dedicated to background on function spaces, Assouad dimension, Baire category,  and other relevant preliminaries. The main results will be presented in Section 2 with proofs in Section 3.

\subsection{Our function spaces and some notation} \label{spaceDef}

Recall that $\omega:[0,1] \to [0,\infty]$ is a \emph{modulus of continuity} if $\omega$ is increasing, $\omega(t)>0$ for $t>0$ and satisfies
\[
\lim_{t \to 0} \omega(t) = \omega(0) = 0.
\]
Throughout the paper, by increasing we mean non-decreasing, i.e. $\omega(x) \leq \omega(y)$ for $x \leq y$. For $f:[0,1] \to \bb{R}$ we define 
\[
[f]_\omega = \sup_{\substack{x,y \in [0,1] \\ x \neq y}} \frac{|f(x)-f(y)|}{\omega(|x-y|)},
\]
and denote the set of \emph{$\omega$-continuous functions} by
\[
C^\omega[0,1] =\{f: [0,1] \to \mathbb{R} \ \ | \ \ [f]_\omega < \infty\}.
\]
One can show that $||\cdot||_\omega: C^\omega[0,1] \to \bb{R}$ where
\[
||f||_\omega = ||f||_\infty +  [f]_\omega
\]
defines a norm on $C^\omega[0,1]$, which we refer to as the \emph{$\omega$-norm}. We call $(\cmega, ||\cdot||_\omega)$ the \emph{$\omega$-space}. We are particularly interested in the subset of functions that are relatively smooth locally, that is, all the functions $f \in \cmega$ satisfying
\[
\lim_{t \to 0} \sup \bigg\{\frac{|f(x)-f(y)|}{\omega(|x-y|)}: x,y \in [0,1], \ x \neq y, \ |x-y| \leq t\bigg\} = 0.
\]
We write $\lilmega$ for the set consisting of such functions and refer to $(\lilmega, ||\cdot||_\omega)$ as the \emph{little $\omega$-space}. 

Given $\alpha \in (0,1)$, if we take $\omega(t) = t^\alpha$ then $\cmega$ is the set of $\alpha$-Hölder functions. We refer to the corresponding space as the $\alpha$-Hölder space and $(\lilmega, ||\cdot||_\omega)$ as the little Hölder space. Another prominent class of continuous functions are the \emph{Lipschitz functions}, where we say a function $f: [0,1] \to \bb{R}$ is Lipschitz if 
\[
\sup_{x,y \in [0,1]} \frac{|f(x)-f(y)|}{|x-y|} < \infty.
\]
Taking $\omega(t)=t$ above, we get the space of Lipschitz functions $C^1[0,1]$, or the Lipschitz space. 

Here we point out that although each $\omega$ determines a unique $\omega$-norm and hence a unique $\omega$-space, it does not uniquely determine $\cmega$. For example, for $\omega_1(t) = t$ and $\omega_2(t)=2t$, both $C^{\omega_1}[0,1]$ and $C^{\omega_2}[0,1]$ are the set of all Lipschitz functions. 

For $d \in \bb{N}$, $x \in \bb{R}^d$ and $r>0$, denote the closed ball of radius $r$ centred at $x$ by $B(x,r) = \{y \in \bb{R}^d: |x-y|\leq r\}$ and the open counterpart by $B^o(x,r) = \{y \in \bb{R}^d: |x-y| < r\}$. Similarly, we write $Q(x,r)$ for the (closed) cube with sides parallel to the axes and of length $r$ centred at $x$. Furthermore, when the context is clear, we write $A \geqs B$ if there exists a $c>0$ such that $A \geq cB$, and $A \leqs B$ if $B \geqs A$. Finally, we use $A \approx B$ when $A \geqs B$ and $B \geqs A$.

\subsection[\texorpdfstring{Baire category and the space $\lilmega$}
                        {Baire category and the space lilmega}]
        {Baire category and the space $\lilmega$} 
In a complete metric space $X$, a set is \emph{meagre} if it is a countable union of nowhere dense subsets of $X$. The complement of a meagre set is called \emph{residual}. We say a \emph{typical} element has a certain property if the set of elements which have the property is residual. One may think of typical elements as generic in a topological sense. For more details see \cite{oxBaire}.

In order to investigate typical functions in $(\lilmega, ||\cdot||_\omega)$, it is necessary for it to be a complete metric space. For completeness, we include the proof of the following lemma.

\begin{lma}
$(\cmega, ||\cdot||_\omega)$ and $(\lilmega, ||\cdot||_\omega)$ are complete.    
\end{lma}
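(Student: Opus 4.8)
The plan is to first prove completeness of $(\cmega, ||\cdot||_\omega)$ directly via Cauchy sequences, and then deduce completeness of $(\lilmega, ||\cdot||_\omega)$ by observing that it is a closed subspace of $\cmega$ (a closed subset of a complete metric space is complete). Throughout I would use that $[\cdot]_\omega$ is a seminorm on $\cmega$; in particular its subadditivity follows from the triangle inequality applied to the numerator inside the supremum defining $[\cdot]_\omega$, before taking the supremum.

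For the first part, let $(f_n)$ be Cauchy in $||\cdot||_\omega$. Since $||g||_\infty \leq ||g||_\omega$, the sequence is Cauchy in $(C^0[0,1], ||\cdot||_\infty)$, which is complete, so $f_n \to f$ uniformly for some $f \in C^0[0,1]$. It remains to check $f \in \cmega$ and $||f_n - f||_\omega \to 0$. Given $\epsilon > 0$, pick $N$ so that $[f_n - f_m]_\omega \leq \epsilon$ for all $m,n \geq N$. Then for each fixed pair $x \neq y$ in $[0,1]$ we have
\[
\frac{|(f_n - f_m)(x) - (f_n - f_m)(y)|}{\omega(|x-y|)} \leq \epsilon ,
\]
and letting $m \to \infty$, using pointwise convergence $f_m \to f$, gives the same bound with $f_m$ replaced by $f$; taking the supremum over $x \neq y$ yields $[f_n - f]_\omega \leq \epsilon$ for all $n \geq N$. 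In particular $[f]_\omega \leq [f_N]_\omega + \epsilon < \infty$, so $f \in \cmega$, and $||f_n - f||_\omega = ||f_n - f||_\infty + [f_n - f]_\omega \to 0$.

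For the second part it suffices to show $\lilmega$ is closed in $(\cmega, ||\cdot||_\omega)$. Write $\Omega_g(t) = \sup\{|g(x)-g(y)|/\omega(|x-y|): x,y\in[0,1],\ x \neq y,\ |x-y|\leq t\}$, so that membership in $\lilmega$ means $\Omega_g(t) \to 0$ as $t \to 0$. Suppose $f_n \in \lilmega$ and $f_n \to f$ in $||\cdot||_\omega$. Since $\Omega_{\cdot}(t)$ is subadditive (again by the triangle inequality inside the supremum) and $\Omega_h(t) \leq [h]_\omega$ for all $h \in \cmega$, we get $\Omega_f(t) \leq [f - f_n]_\omega + \Omega_{f_n}(t)$. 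Given $\epsilon > 0$, first choose $n$ with $[f - f_n]_\omega < \epsilon/2$, then choose $\delta > 0$ with $\Omega_{f_n}(t) < \epsilon/2$ for $t < \delta$; hence $\Omega_f(t) < \epsilon$ for $t < \delta$, so $f \in \lilmega$ and $\lilmega$ is closed.

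I do not expect a substantial obstacle here: the argument is routine once one notices that the efficient route for $\lilmega$ is to prove closedness rather than to run a separate Cauchy-sequence argument. The only points needing minor care are the interchange of the limit in $m$ with the fixed-pair quotient (legitimate because we fix the pair $(x,y)$ before taking the supremum) and the subadditivity of $[\cdot]_\omega$ and of $\Omega_{\cdot}(t)$.
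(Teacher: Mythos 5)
Your proof is correct, and its overall shape matches the paper's (first completeness of $(\cmega,||\cdot||_\omega)$ via the completeness of $(C^0[0,1],||\cdot||_\infty)$, then completeness of $(\lilmega,||\cdot||_\omega)$ by showing it is a closed subspace), but the way you establish the key estimate $[f_n-f]_\omega\leq\epsilon$ is genuinely different and simpler. The paper passes to a subsequence with $||f_n-f_{n+1}||_\omega\leq 2^{-n}\epsilon$ and $||f_n-f||_\infty\leq 2^{-n-1}\omega(2^{-n})\epsilon$, and then runs a dyadic case analysis on $|x-y|$ with a telescoping sum to control the quotient; you instead fix a pair $x\neq y$, use $[f_n-f_m]_\omega\leq\epsilon$ for $m,n\geq N$, let $m\to\infty$ in the fixed-pair quotient (legitimate since $f_m\to f$ pointwise and the denominator $\omega(|x-y|)$ is a fixed positive quantity; if it happens to be $+\infty$ the quotient is $0$ anyway), and only then take the supremum. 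This is the standard ``Fatou-type'' argument for such seminorms and avoids both the subsequence extraction and the dyadic bookkeeping, at no loss of generality. Your treatment of $\lilmega$ via the local oscillation functional $\Omega_g(t)$ and the bound $\Omega_f(t)\leq [f-f_n]_\omega+\Omega_{f_n}(t)$ is essentially the same two-term approximation argument the paper uses (the paper just phrases it with a subsequence satisfying $||f_n-f||_\omega\leq n^{-1}$ and thresholds $t_n$), so no substantive difference there. In short: correct, same global strategy, cleaner execution of the first half.
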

\begin{proof}
Let $(f_n) \subseteq (\cmega, ||\cdot||_\omega)$ be Cauchy, in which case $(f_n)$ is also Cauchy in $(C^0[0,1], ||\cdot||_\infty)$.  Since $(C^0[0,1], ||\cdot||_\infty)$ is complete, there exists some $f \in C^0[0,1]$ such that $||f_n-f||_\infty \to 0$. Let $\epsilon>0$. By taking subsequences and relabelling the terms, we may assume $(f_n)$ satisfies $||f_n-f_{n+1}||_\omega \leq 2^{-n} \epsilon$ and $||f_n-f||_\infty \leq 2^{-n-1}\omega(2^{-n})\epsilon$ for all $n$. Choose $N$ such that $||f_n-f||_\infty \leq \epsilon$ for all $n \geq N$. Consider $f_n$ where $n \geq N$. Let $x,y \in [0,1]$ be distinct and let $m$ be the integer such that $2^{-m} \leq |x-y| < 2^{-m+1}$. If $m \leq n$,
\[
\frac{|(f_n-f)(x)-(f_n-f)(y)|}{\omega(|x-y|)} \leq \frac{2||f_n-f||_\infty}{\omega(2^{-n})} \leq 2^{-n}\epsilon \leq \epsilon.
\]
Otherwise $m > n$ and
\begin{align*}
\frac{|(f_n-f)(x)-(f_n-f)(y)|}{\omega(|x-y|)} &\leq   \frac{|(f_m-f)(x)-(f_m-f)(y)| + \sum^{m-1}_{k=n} |(f_k-f_{k+1})(x)-(f_k-f_{k+1})(y)|}{\omega(|x-y|)} \\
&\leq  \frac{|(f_m-f)(x)-(f_m-f)(y)|}{\omega(2^{-m})} +  \sum^{m-1}_{k=n} ||f_k-f_{k+1}||_\omega  \\
&\leq \sum^{m}_{k=n} 2^{-k} \epsilon \\
&\leq \epsilon.
\end{align*}
In other words, $[f_n-f]_\omega \leq \epsilon$. Hence, 
\begin{gather*}
||f_n-f||_\omega = ||f_n-f||_\infty + [f_n-f]_\omega \leq 2\epsilon, \\
[f]_\omega \leq [f_n-f]_\omega + [f_n]_\omega < \infty
\end{gather*}
and so $f_n \to f \in (\cmega, ||\cdot||_\omega)$ also. This shows $(\cmega, ||\cdot||_\omega)$ is complete. 

Now, let $(f_n) \subseteq \lilmega$ and suppose $f_n \to f \in \cmega$ w.r.t.$||\cdot||_\omega$. Again by taking subsequences and relabelling the terms, we may assume $||f_n-f||_\omega \leq n^{-1}$ for all $n$. Define $(t_n)$ to be a strictly decreasing sequence such that $t_n>0$ and
\[
\frac{|f_n(x)-f_n(y)|}{\omega(|x-y|)} \leq \frac{1}{n}
\]
for all $x,y \in[0,1]$ distinct with $|x-y| \leq t_n$, using the fact that $f_n \in \lilmega$. Let $0<t \leq t_1$, then $t_{n+1} < t \leq t_n$ for some integer $n$. Suppose $x,y \in[0,1]$ are distinct with $|x-y| \leq t$. Then,
\begin{align*}
\Fo{x}{y} &\leq  \frac{|(f-f_n)(x)-(f-f_n)(y)|}{\omega(|x-y|)} + \frac{|f_n(x)-f_n(y)|}{\omega(|x-y|)}  \\
&\leq ||f_n-f||_\omega + \frac{|f_n(x)-f_n(y)|}{\omega(|x-y|)}\\
&\leq \frac{1}{n} + \frac{1}{n} \\
&= \frac{2}{n}\\
&\to 0
\end{align*}
as $t \to 0$. This shows $f \in \lilmega$. Thus, $\lilmega$ is closed under $||\cdot||_\omega$ in $\cmega$, from which the completeness of $(\lilmega, ||\cdot||_\omega)$ follows.
\end{proof}

It may seem that $\lilmega$ is rather small compared to the bigger set $\cmega$, indeed Berezhnoi shows in \cite{Berezhnoi} that for $\omega$ concave $\lilmega$ is meagre in $(\cmega, ||\cdot||_\omega)$. However, if we consider the set $\calpha$ of $\alpha$-H\"older functions and its little set $\lilpha$, we see that $C^{\alpha'}[0,1] \subseteq \lilpha$ for all $0 < \alpha < \alpha' < 1$.

\subsection{Assouad dimension and spectrum}
 Let $E \subseteq \bb{R}^d$ be non-empty and $r>0$. We say a collection of balls $\{B(x_i,r)\}_i$ is an \emph{$r$-packing} of $E$ if $x_i \in E$ for all $i$ and the balls are pairwise disjoint. Denote the maximum number of balls in an $r$-packing of $E$ by $N_r(E)$. Recall that the \emph{upper box dimension} of a non-empty bounded set $F \subseteq \bb{R}^d$ is given by
\[
\ubd F = \limsup_{r \to 0} \frac{\log N_r(F)}{-\log r}.
\]
For a non-empty set $F \subseteq \bb{R}^d$, its \emph{Assouad dimension} is defined as
\begin{align*}
\ad F = \inf \biggl\{ \beta:& \text{ there exists } C>0 \text{ such that for all } 0<r<R \text{ and } x \in F, \\
                      &  N_r(B^o(x,R) \cap F) \leq C \biggl(\frac{R}{r}\biggr)^\beta \biggr\}.    
\end{align*}

It can be shown that $\ubd F \leq \ad F$ for all $F \subseteq \bb{R}^d$ bounded, where the inequality can be strict. The Assouad spectrum then serves to interpolate between the upper box dimension and the Assouad dimension. More precisely, we define the \emph{Assouad spectrum} of $F$ to be
\begin{align*}
\ad^{\theta} F = \inf \biggl\{ \beta:& \text{ there exists } C>0 \text{ such that for all } 0<r<1 \text{ and } x \in F, \\
                      &  N_r(B^o(x,r^\theta) \cap F) \leq C \biggl(\frac{r^\theta}{r}\biggr)^\beta \biggr\}    
\end{align*}
for $\theta \in (0,1)$. This was introduced by Fraser and Yu in \cite{FrYu2, FrYu1}. The \emph{quasi-Assouad dimension} $\dim_{\textup{qA}} F $ was defined by Lü and Xi in \cite{Quas} and it may be defined by $\dim_{\textup{qA}} F = \lim_{\theta \to 1} \ad^{\theta} F $, see \cite{JMF_book}. 

In all definitions above, $N_r(B^o(x,R) \cap F)$ could be replaced with $N'_r(Q(x,R) \cap F)$, where $N'_r(E)$ is the minimum number of (closed) cubes which $E$ intersects in the standard $r$-mesh oriented at the origin. 

Note that $\ubd F \leq \ad^{\theta} F\leq  \dim_{\textup{qA}} F \leq \ad F$ holds for all $\theta \in (0,1)$, and in fact
\[
\ad^{\theta} F\leq \min \left\{ \frac{\ubd F}{1-\theta}, \ \dim_{\textup{qA}} F\right\}.
\]
Moreover, the mapping $\theta \mapsto \ad^{\theta}F$ is continuous in $\theta \in (0,1)$ and
\begin{align*}
\ad^{\theta} F &\to \ubd F \ \text{ as } \ \theta \to 0.
\end{align*}
See \cite{JMF_book} for more discussions on the Assouad dimension and spectrum.

\section{Main results}
We state our main results in this section and defer our proofs to Section \ref{secProofs}. We begin with the Assouad spectra of the graphs, and obtain a general upper bound which holds for all functions. This estimate is a mild generalisation of a recent estimate of Chrontsios-Garitsis and Tyson \cite{holder}, which considered the space of Hölder functions.

\begin{prop} \label{omgBound}
Let $\omega$ be a modulus of continuity such that $\omega(1) < \infty$ and 
\[
\eta = \liminf_{t \to 0} \frac{\log\omega(t)}{\log t}
\]
satisfies $\eta \leq 1$. Then, for all $f \in \cmega$ and $\theta \in (0,\eta)$,
\[
\ad^{\theta} G_f \leq 2-\frac{\eta-\theta}{1-\theta} < 2.
\]
Additionally, we have $\ubd G_f \leq 2-\eta$. 
\end{prop}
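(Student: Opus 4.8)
The plan is to estimate, for a fixed $f\in\cmega$, the number of cubes of the standard $r$-mesh meeting a suitable piece of $G_f$, and to feed this into the definitions of $\ubd$ and $\ad^{\theta}$. By the remark following the definition of the Assouad spectrum, it suffices to work with the mesh-counting function $N'_r$ and with axis-parallel cubes $Q(x,R)$ in place of balls. The single observation driving everything is: if $J\subseteq[0,1]$ is an interval with $|J|\le r$, then, since $\omega$ is increasing and $\omega(1)<\infty$,
\[
\sup_{s,t\in J}|f(s)-f(t)|\;\le\;[f]_\omega\,\omega(|J|)\;\le\;[f]_\omega\,\omega(r),
\]
so the part of $G_f$ lying above $J$ is contained in a rectangle of width $|J|$ and height at most $[f]_\omega\,\omega(r)$, which meets at most $C\bigl(1+\omega(r)/r\bigr)$ cubes of the $r$-mesh, with $C$ depending only on $[f]_\omega$.

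For the box-dimension bound I would split $[0,1]$ into $\lceil 1/r\rceil$ intervals of length at most $r$ and sum the observation over them, giving
\[
N'_r(G_f)\;\lesssim\;\frac1r\Bigl(1+\frac{\omega(r)}{r}\Bigr)\;=\;r^{-1}+r^{-2}\omega(r).
\]
Taking logarithms, dividing by $-\log r$, and letting $r\to0$, the term $r^{-1}$ contributes $1$ and the term $r^{-2}\omega(r)$ contributes $2-\liminf_{r\to0}\tfrac{\log\omega(r)}{\log r}=2-\eta$; hence $\ubd G_f\le\max\{1,2-\eta\}=2-\eta$, the last equality using $\eta\le1$.

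For the Assouad spectrum, fix $\theta\in(0,\eta)$, a point $x\in G_f$ and a scale $0<r<1$. The cube $Q(x,r^\theta)$ projects onto the first coordinate to an interval of length at most $r^\theta$, so $Q(x,r^\theta)\cap G_f$ lies above an interval of length $\le r^\theta$; splitting that interval into $\lesssim r^{\theta-1}$ pieces of length $\le r$ and applying the observation gives
\[
N'_r\bigl(Q(x,r^\theta)\cap G_f\bigr)\;\lesssim\;r^{\theta-1}\Bigl(1+\frac{\omega(r)}{r}\Bigr)\;=\;r^{\theta-1}+r^{\theta-2}\omega(r).
\]
Now fix $\epsilon\in(0,\eta-\theta)$. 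Unwinding $\eta=\liminf_{t\to0}\tfrac{\log\omega(t)}{\log t}$ produces $r_0\in(0,1)$ with $\omega(r)\le r^{\eta-\epsilon}$ for all $0<r\le r_0$, so for such $r$ the right-hand side above is $\lesssim r^{\theta-1}+r^{\theta-2+\eta-\epsilon}$. Put $\beta_\epsilon=2-\tfrac{(\eta-\epsilon)-\theta}{1-\theta}$; a direct computation gives $(\theta-1)\beta_\epsilon=\theta-2+\eta-\epsilon$, and $\beta_\epsilon\ge1$ because $\eta\le1$. Both exponents $\theta-1$ and $\theta-2+\eta-\epsilon$ are therefore $\ge(\theta-1)\beta_\epsilon$, so, since $0<r<1$, each of $r^{\theta-1}$ and $r^{\theta-2+\eta-\epsilon}$ is at most $r^{(\theta-1)\beta_\epsilon}=(r^\theta/r)^{\beta_\epsilon}$; thus $N'_r(Q(x,r^\theta)\cap G_f)\lesssim(r^\theta/r)^{\beta_\epsilon}$ for $0<r\le r_0$. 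For the remaining scales $r_0\le r<1$, the left-hand side is bounded by a constant (the cube $Q(x,r^\theta)$ has side $\le1$ and the mesh has side $\ge r_0$, so it meets only a constant number of mesh cubes) while $(r^\theta/r)^{\beta_\epsilon}\ge1$, so the same inequality holds after enlarging the constant. Hence $\ad^{\theta} G_f\le\beta_\epsilon$ for every such $\epsilon$, and letting $\epsilon\to0^{+}$ gives $\ad^{\theta} G_f\le 2-\tfrac{\eta-\theta}{1-\theta}$, which is $<2$ because $\theta<\eta$.

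The covering counts and the logarithm bookkeeping are routine; I expect the only delicate point to be the two-scale split in the Assouad-spectrum estimate. The pointwise bound $\omega(r)\le r^{\eta-\epsilon}$ extracted from the $\liminf$ defining $\eta$ is valid only at small scales, which forces the separate (trivial) treatment of scales near $1$; and absorbing the lower-order term $r^{\theta-1}$ into $(r^\theta/r)^{\beta_\epsilon}$ works exactly because the standing hypothesis $\eta\le1$ makes $\beta_\epsilon\ge1$---this is the place where that hypothesis enters the argument.
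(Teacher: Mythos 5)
Your proof is correct and follows essentially the same route as the paper: count the $r$-columns meeting $Q(x,r^\theta)$, bound the oscillation of $f$ over each column by $[f]_\omega\,\omega(r)$, and convert the liminf defining $\eta$ into the pointwise bound $\omega(r)\le r^{\eta-\epsilon}$ at small scales, with the hypothesis $\eta\le 1$ entering exactly where you say (to absorb the lower-order term coming from columns with few cubes). The only cosmetic differences are that you prove $\ubd G_f\le 2-\eta$ directly by covering $[0,1]$ with roughly $1/r$ intervals, whereas the paper deduces it from the spectrum bound via $\ad^{\theta}G_f\to\ubd G_f$ as $\theta\to 0$, and you dispatch the scales $r_0\le r<1$ by a trivial constant bound rather than by absorbing $\omega(1)r_0^{\epsilon(\theta-1)-\theta}$ into the comparison constant.
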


See Section \ref{specProofs} for the proof. Indeed, if $\eta >1$ then the only functions in $\cmega$ are constant, whose graphs have Assouad dimension and spectra equal to $1$, see Proposition \ref{omgCnst}. Notice also that if we simply compare the Assouad spectrum to the upper box dimension using the general inequality between them, we obtain
\[
\ad^{\theta} G_f \leq \frac{\ubd G_f}{1-\theta} \leq \frac{2-\eta}{1-\theta}.
\]
This is weaker than the upper bound in Proposition \ref{omgBound}, as $2-\frac{\eta-\theta}{1-\theta} <\frac{2-\eta}{1-\theta}$ for $\theta \in (0, \eta)$. 

For $\alpha \in (0,1)$, the $\alpha$-Hölder spaces have $\eta=\alpha$. In the recent paper \cite{holder} by Chrontsios-Garitsis and Tyson on the Assouad spectrum of Hölder graphs, the upper bound given in \cite[Theorem 1.1]{holder} coincides with the one above. For the upper box dimension, the upper bound also agrees with the one given in \cite[Corollary 11.2]{KJF}.

Next, we give a technical lemma for concave moduli. The proof can be found in Section \ref{proofPrelim}.

\begin{lma} \label{omgCcvLim}
Let $\omega$ be a concave modulus of continuity, then the limit
\[
\lim_{t \to 0} \frac{\log \omega(t)}{\log t}
\]
exists.
\end{lma}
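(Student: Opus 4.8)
The plan is to reduce the statement to a monotonicity property of $\omega(t)/t$ and then split into two regimes. Since $\omega$ is concave on $[0,1]$ with $\omega(0)=0$, the three-slopes inequality shows that $g(t):=\omega(t)/t$ is non-increasing on $(0,1]$: for $0<s<t\leq1$ we have $\frac{\omega(t)-\omega(s)}{t-s}\leq\frac{\omega(s)-\omega(0)}{s}=\frac{\omega(s)}{s}$, which rearranges to $g(t)\leq g(s)$. Being monotone, $g$ has a limit $L:=\lim_{t\to0}g(t)\in(0,\infty]$, with $L\geq g(1)=\omega(1)>0$, and since
\[
\frac{\log\omega(t)}{\log t}=1+\frac{\log g(t)}{\log t},
\]
it suffices to show that $\lim_{t\to0}\frac{\log g(t)}{\log t}$ exists. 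If $L<\infty$ this is immediate: $\log g(t)\to\log L$ is finite while $\log t\to-\infty$, so the ratio tends to $0$ and the limit in the statement equals $1$.

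The substantial case is $L=\infty$, where $\log g(t)\to+\infty$. Put $u=-\log t$ and $\phi(u):=\log g(e^{-u})=\log\omega(e^{-u})+u$, so that $\phi$ is non-decreasing with $\phi(u)\to\infty$ and is $1$-Lipschitz, because $\phi'(u)=1-e(e^{-u})$ where $e(r):=r\omega'(r)/\omega(r)\in[0,1]$ is the elasticity of the concave function $\omega$; the goal becomes that $\lim_{u\to\infty}\phi(u)/u$ exists. My plan is to show that $\phi$ is in fact a concave function of $u$ -- equivalently, that $e(r)$ is non-increasing in $r$ -- since then the difference quotients $u\mapsto\frac{\phi(u)-\phi(u_0)}{u-u_0}$ are monotone and hence converge to some $c\in[0,1]$, and as $\phi(u_0)/u\to0$ we conclude $\phi(u)/u\to c$. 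As a fallback I would try a Fekete-type argument at geometric scales: fix $b>1$, show that $a_n:=-\log\omega(b^{-n})$ is of additive type (super- or sub-additive), deduce $\lim_n a_n/n$ from Fekete's lemma, and then interpolate to all scales using $\frac{s}{t}\,\omega(t)\leq\omega(s)\leq\omega(t)$ for $0<s<t\leq1$.

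I expect the $L=\infty$ case to be the main obstacle. Concavity of $\omega$ gives immediate control of $g(t)=\omega(t)/t$ but not of its logarithmic derivative, so proving that $\phi$ is convex or concave (equivalently, that the elasticity is monotone, or that the geometric-scale sequence $(a_n)$ is of additive type) is the crux; I anticipate that this step must exploit more than bare concavity of $\omega$ -- for instance a doubling or regular-variation feature -- in order to rule out oscillation of $\log\omega(t)/\log t$ between its liminf and limsup.
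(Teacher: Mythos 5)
Your reduction to the monotonicity of $g(t)=\omega(t)/t$ and your disposal of the case $L=\lim_{t\to0}\omega(t)/t<\infty$ are both correct, but the case $L=\infty$, which you yourself identify as the crux, is left as a plan, and neither branch of that plan can be carried out from concavity alone: concavity of $\omega$ does not make the elasticity $r\omega'(r)/\omega(r)$ monotone (so $\phi$ need not be concave or convex in $u$), and the sequence $a_n=-\log\omega(b^{-n})$ need not be super- or sub-additive. All that concavity yields in logarithmic coordinates is essentially what you already extracted, namely that $u\mapsto-\log\omega(e^{-u})$ is non-decreasing and $1$-Lipschitz, and this does not determine $\lim_{u\to\infty}\phi(u)/u$. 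So, as a proof, the proposal is incomplete at exactly the step you flagged.

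However, your closing suspicion is correct in the strongest sense: bare concavity does not rule out oscillation of $\log\omega(t)/\log t$, and the statement as given fails without an extra hypothesis, so the missing step cannot be supplied. Fix $a\in(0,\tfrac12)$ and scales $t_{k+1}=t_k^{M_k}$ with $M_k\to\infty$ and $t_1$ small, and let $\omega$ be continuous and piecewise linear with $\omega(0)=0$, $\omega(t_k)=t_k^{a}$, affine on each $[t_{k+1},t_k]$ and affine with small slope on $[t_1,1]$. The slope on $[t_{k+1},t_k]$ is comparable to $t_k^{a-1}$, so for a rapidly decreasing sequence $(t_k)$ the slopes increase as $t\downarrow0$ and $\omega$ is an increasing concave modulus of continuity. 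At the nodes, $\log\omega(t_k)/\log t_k=a$. At the geometric means $u_k=\sqrt{t_kt_{k+1}}$, for large $k$,
\[
\omega(u_k)\ \le\ t_{k+1}^{a}+2t_k^{a-1}u_k\ \le\ 2t_{k+1}^{a},
\qquad\text{since }\ t_k^{a-1}u_k=t_k^{(\frac12-a)(M_k-1)}\,t_{k+1}^{a}\ll t_{k+1}^{a},
\]
and hence $\dfrac{\log\omega(u_k)}{\log u_k}\ \ge\ \dfrac{2aM_k}{M_k+1}+\dfrac{2\log 2}{(M_k+1)\log t_k}\ \to\ 2a$. Thus the liminf is at most $a$ while the limsup is at least $2a$, so the limit does not exist. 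The same example pinpoints where the paper's own one-paragraph argument breaks down: the bound $\frac{\log\omega(\alpha t)}{\log(\alpha t)}\le\eta+\epsilon+\frac{\log\alpha}{\log\alpha+\log t}$ is only derived for near-optimal scales $t$ (those with $\frac{\log\omega(t)}{\log t}\le\eta+\epsilon$), and the error term is not uniform in $\alpha$: writing an arbitrary small scale $s=\alpha t$ with $t$ the nearest good scale may force $\alpha$ so small that $\frac{\log\alpha}{\log\alpha+\log t}$ is close to $1$, which is exactly what happens in the windows between consecutive nodes above. The limit does exist under additional regularity of the kind you anticipated (for instance regular variation of $\omega$, or a doubling-type condition such as $\omega(t^2)\gtrsim\omega(t)^2$); note also that Proposition \ref{omgBound} only involves $\liminf_{t\to0}\log\omega(t)/\log t$, so the natural repair of Lemma \ref{omgCcvLim} and of the statements relying on it is either to add such a hypothesis or to work with the liminf throughout.
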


The proof of the following theorem is presented in Section \ref{specProofs}.

\begin{thm} \label{specTyp}
Let $\omega$ be a concave modulus of continuity and write
\[
\eta = \lim_{t \to 0} \frac{\log\omega(t)}{\log t}.
\]
Then
\begin{enumerate}[(i)]
\item If $\eta \geq 1$, then $\ad^{\theta} G_f= 1$ for all $f \in \cmega$ and $\theta \in (0,1)$. 
\item If $\eta < 1$, then for a typical $f \in (\lilmega, ||\cdot||_\omega)$,
\[
\ad^{\theta} G_f = 2- \frac{\eta-\theta}{1-\theta}
\]
for $\theta \leq \eta$, and $\ad^{\theta} G_f = 2$ for $\theta > \eta$. In particular, this holds in all little Hölder spaces with $\eta = \alpha \in (0,1)$.
\end{enumerate}
\end{thm}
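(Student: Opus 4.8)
The plan is to split into the two cases according to the value of $\eta$. Part (i) should be quick: if $\eta \geq 1$ then by Lemma \ref{omgCcvLim} the limit exists and equals $\eta \geq 1$, and a short argument (presumably Proposition \ref{omgCnst}, referenced in the excerpt) shows that the only functions in $\cmega$ are constant — for a concave $\omega$ with $\omega(t)/t \to 0$ being impossible forces Lipschitz-type control which, combined with $\eta\geq 1$, collapses the space. A constant graph is a line segment, which has Assouad spectrum identically $1$. So the real content is Part (ii).

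For Part (ii), the upper bound is already handed to us: Proposition \ref{omgBound} gives $\ad^\theta G_f \leq 2 - \frac{\eta-\theta}{1-\theta}$ for all $f \in \cmega \supseteq \lilmega$ and all $\theta \in (0,\eta)$, and the general inequality $\ad^\theta F \leq \ubd F/(1-\theta) \cdots$ — wait, more simply $\ad^\theta F \leq 2$ always since $G_f \subseteq \bb{R}^2$ — handles $\theta \geq \eta$. By continuity of $\theta \mapsto \ad^\theta G_f$, the bound $2-\frac{\eta-\theta}{1-\theta}$ also holds at $\theta = \eta$ itself. So everything reduces to proving the matching \emph{lower} bound for a typical $f$: namely that the set
\[
\mathcal{G} = \left\{ f \in \lilmega \ : \ \ad^\theta G_f \geq 2 - \tfrac{\eta-\theta}{1-\theta} \text{ for all } \theta \in (0,\eta], \text{ and } \ad^\theta G_f = 2 \text{ for } \theta \in (\eta,1) \right\}
\]
is residual. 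The standard mechanism is to write $\mathcal{G}$ as a countable intersection of sets, each of which I will show is open and dense in $(\lilmega, \|\cdot\|_\omega)$. Fixing a countable dense set of parameters $\theta \in (0,1)\cap\bb{Q}$ and a sequence $\epsilon_k \downarrow 0$, the building blocks should be sets of the form
\[
U_{\theta,k,n} = \left\{ f \in \lilmega \ : \ \exists\, x\in[0,1],\ 0<r<1/n \text{ with } N_r\big(B^o(x,r^\theta)\cap G_f\big) \geq \big(r^{\theta-1}\big)^{\,s(\theta)-\epsilon_k} \right\},
\]
where $s(\theta) = \min\{2, 2-\frac{\eta-\theta}{1-\theta}\}$; openness of $U_{\theta,k,n}$ follows because a strict packing inequality witnessed at scales $r < r^\theta$ is stable under small $\|\cdot\|_\infty$-perturbations (hence small $\|\cdot\|_\omega$-perturbations) of $f$, and the intersection over $\theta$ (rational), $k$, $n$ will force $\ad^\theta G_f \geq s(\theta)$ for all rational $\theta$, then for all $\theta$ by continuity of the spectrum.

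The crux is \textbf{density}: given an arbitrary $g \in \lilmega$ and $\delta > 0$, I must produce $f \in \lilmega$ with $\|f - g\|_\omega < \delta$ whose graph, at some arbitrarily small scale, exhibits the near-extremal packing $N_r(B^o(x,r^\theta)\cap G_f) \gtrsim (r^{\theta-1})^{s(\theta)-\epsilon_k}$ for every prescribed $\theta$. The natural construction is to perturb $g$ by adding a small, highly oscillatory bump supported on a short interval $I$ of some length $L$, where the bump oscillates at a finer scale $\ell \ll L$ chosen so that $\ell^\theta \approx L$ — this is exactly the regime that the Assouad spectrum at exponent $\theta$ sees, a window of size $R = L$ resolved at scale $r = \ell$ with $r^\theta \approx R$. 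Within that window the graph of the bump should fill an $L \times (\text{height})$ rectangle as densely as the space permits: since $f$ must stay in $\lilmega$, the oscillation amplitude over a sub-interval of length $\ell$ is constrained to be $\lesssim \phi(L)\,\omega(\ell)$ for a factor $\phi(L)\to 0$ (this is what "little" buys us, and it is why the perturbation can be made $\|\cdot\|_\omega$-small), so a zig-zag of $L/\ell$ teeth each of height $\approx \phi(L)\omega(\ell)$ covers roughly $(L/\ell)\cdot \phi(L)\omega(\ell)/\ell$ cubes of side $\ell$ when $\omega(\ell)/\ell \gtrsim 1$... I will need to recalibrate: the correct count, using $\ell^\theta \approx L$ and $\omega(\ell)\approx \ell^\eta$ (valid up to sub-polynomial error by definition of $\eta$ and Lemma \ref{omgCcvLim}), comes out to $(L/\ell)^{2 - (\eta-\theta)/(1-\theta) - o(1)} = (r^{\theta-1})^{s(\theta)-o(1)}$, matching Proposition \ref{omgBound}. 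For $\theta > \eta$ one instead gets the full exponent $2$ because the window is so wide relative to $\ell$ that even the $\omega$-constrained oscillation fills a genuinely two-dimensional patch. The delicate points — and where I expect to spend the most effort — are: (a) verifying the perturbed function genuinely lies in $\lilmega$, i.e. controlling $[f]_\omega$ globally and the little-$o$ modulus uniformly, which requires the bump's amplitude to be tuned down by a factor tending to $0$ with $L$; (b) making the density argument work \emph{simultaneously} for a sequence of $\theta$'s and $\epsilon_k$'s by nesting countably many such bumps at a rapidly decreasing sequence of scales, ensuring they do not interfere; and (c) getting the combinatorial packing count sharp enough to hit $s(\theta) - \epsilon_k$ rather than something weaker. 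Concavity of $\omega$ is used throughout to guarantee $\omega(\ell)/\ell$ is decreasing and that $\omega$ is sub-additive, which keeps the Hölder-type seminorm of the zig-zag under control.
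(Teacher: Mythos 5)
Your part (i) contains a genuine error at the boundary case $\eta=1$. You justify (i) by claiming that $\eta\geq 1$ forces $\cmega$ to consist of constant functions, but Proposition \ref{omgCnst} requires $\eta>1$ strictly, and the claim is false when $\eta=1$: for $\omega(t)=t$ the space $\cmega$ is the full Lipschitz space, and for $\omega(t)=t(1+|\log t|)$ it even contains non-Lipschitz functions whose graphs typically have Assouad dimension $2$ (this is exactly the example highlighted after Theorem \ref{adTyp}), so no ``collapse'' occurs. The statement of (i) is still true at $\eta=1$, but it needs the argument the paper gives: Proposition \ref{omgBound} applies for every $\theta\in(0,\eta)=(0,1)$ and yields $\ad^{\theta}G_f\leq 2-\frac{1-\theta}{1-\theta}=1$, while continuity of $f$ gives the trivial matching lower bound $N'_r\big(Q((0,f(0)),r^\theta)\cap G_f\big)\gtrsim r^{\theta-1}$, hence $\ad^{\theta}G_f\geq 1$. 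As written, your proof of (i) would fail precisely in this case.

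For part (ii) your plan is essentially the paper's: the upper bound comes from Proposition \ref{omgBound}; the lower bound comes from a Baire category argument in which openness/$G_\delta$ follows from stability of a finite packing of the graph under small $\|\cdot\|_\infty$-perturbations, density is obtained by inserting zig-zag perturbations calibrated so that the oscillation scale $\ell$ and window $L$ satisfy $\ell^\theta\approx L$ with tooth height a vanishing multiple of $\omega(\ell)$ (this is what keeps the perturbation small in $\|\cdot\|_\omega$ and inside $\lilmega$), giving the count $(L/\ell)\cdot\omega(\ell)/\ell\approx(\ell^{\theta-1})^{2-\frac{\eta-\theta}{1-\theta}-o(1)}$; and simultaneity in $\theta$ is handled by rational $\theta$ plus continuity of the spectrum. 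Two remarks on where you diverge: your concern (b) about nesting countably many bumps to serve all $\theta$ at once is unnecessary, since density is only needed for each fixed $\theta$ (and each scale threshold) separately, after which the countable intersection of residual sets does the work --- the paper's Lemma \ref{omgSpecDense} runs the construction for one $\theta<\eta$ at a time; and for $\theta>\eta$ you propose a separate direct construction, whereas the paper gets this range for free from $\ad^{\eta}G_f=2$ (by continuity of the spectrum on the residual set for $\theta<\eta$) combined with the fact that once the spectrum attains the Assouad dimension it remains constant, \cite[Corollary 3.6]{FrYu1}. Your direct route should also work, but you would have to redo the packing estimates in the regime where the tooth height exceeds the window height; the paper's appeal to the known spectrum property avoids this.
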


In particular, part (ii) shows that the upper bound for the Assouad spectrum in Proposition \ref{omgBound} is sharp, which agrees with \cite[Theorem 1.2]{holder}. 

An immediate consequence of Theorem \ref{specTyp} is the following result describing the quasi-Assouad dimension.

\begin{cor} \label{qAtyp}
Let $\omega$ be a concave modulus of continuity and write
\[
\eta = \lim_{t \to 0} \frac{\log\omega(t)}{\log t}.
\]
Then
\begin{enumerate}[(i)]
\item If $\eta \geq 1$, then $\dim_{\textup{qA}} G_f= 1$ for all $f \in \cmega$. 
\item If $\eta < 1$, then for a typical $f \in (\lilmega, ||\cdot||_\omega)$,
\[
\dim_{\textup{qA}} G_f = 2.
\]
\end{enumerate}
\end{cor}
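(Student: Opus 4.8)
The plan is to derive Corollary \ref{qAtyp} directly from Theorem \ref{specTyp} together with the characterisation $\dim_{\textup{qA}} G_f = \lim_{\theta \to 1} \ad^{\theta} G_f$ recorded in the preliminaries. For part (i), if $\eta \geq 1$ then Theorem \ref{specTyp}(i) gives $\ad^{\theta} G_f = 1$ for \emph{every} $f \in \cmega$ and every $\theta \in (0,1)$, so taking the limit as $\theta \to 1$ immediately yields $\dim_{\textup{qA}} G_f = 1$ for all $f \in \cmega$; no Baire category argument is needed here.

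For part (ii), assume $\eta < 1$. Theorem \ref{specTyp}(ii) says that the set
\[
R = \{ f \in \lilmega : \ad^{\theta} G_f = 2 \text{ for all } \theta \in (\eta,1) \}
\]
is residual in $(\lilmega, \|\cdot\|_\omega)$ — strictly speaking, Theorem \ref{specTyp}(ii) asserts that for a typical $f$ the full spectrum formula holds, so the residual set on which $\ad^{\theta} G_f = 2$ for all rational $\theta \in (\eta,1)$ (a countable intersection of residual sets, hence residual) already suffices. For any $f$ in this residual set, $\dim_{\textup{qA}} G_f = \lim_{\theta \to 1} \ad^{\theta} G_f = \lim_{\theta \to 1} 2 = 2$. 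Since $G_f \subseteq \bb{R}^2$ forces $\dim_{\textup{qA}} G_f \leq 2$ trivially, equality holds, proving that a typical $f \in (\lilmega, \|\cdot\|_\omega)$ satisfies $\dim_{\textup{qA}} G_f = 2$.

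There is essentially no obstacle: the corollary is a formal consequence of the theorem and the limit characterisation of the quasi-Assouad dimension. The only point requiring a word of care is that Theorem \ref{specTyp}(ii) is phrased pointwise-in-$\theta$ for a typical $f$, so one should note that intersecting over a countable dense set of $\theta$-values preserves residuality, and that continuity of $\theta \mapsto \ad^{\theta} G_f$ (stated in the preliminaries) together with monotonicity lets one pass to the limit $\theta \to 1$ through that countable set. I would therefore write the proof in two short paragraphs mirroring the two cases, citing Theorem \ref{specTyp} and the identity $\dim_{\textup{qA}} F = \lim_{\theta \to 1} \ad^{\theta} F$.
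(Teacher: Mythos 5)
Your proposal is correct and matches the paper's approach: the paper states Corollary \ref{qAtyp} as an immediate consequence of Theorem \ref{specTyp} together with the identity $\dim_{\textup{qA}} F = \lim_{\theta \to 1} \ad^{\theta} F$, exactly as you argue (and note that Theorem \ref{specTyp}(ii) as proved already gives a single residual set on which the formula holds for all $\theta$ simultaneously, so your extra countable-intersection remark is harmless but not needed; indeed a single $\theta \in (\eta,1)$ with $\ad^{\theta} G_f = 2$ would suffice since $\ad^{\theta} G_f \leq \dim_{\textup{qA}} G_f \leq 2$).
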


Next we present a similar result regarding the Assouad dimension of the graphs, the proof of which can be found in Section \ref{adProofs}.

\begin{thm} \label{adTyp}
Suppose $\omega$ is a concave modulus of continuity.
\begin{enumerate}[(i)]
\item If $\omega$ satisfies 
\[
\lim_{t \to 0} \frac{\omega(t)}{t} < \infty, 
\]
then $\cmega$ is the set of all Lipschitz functions and $\ad G_f = 1$ for all $f \in \cmega$. 

\item Otherwise $\omega$ is such that
\[
\lim_{t \to 0} \frac{\omega(t)}{t} = \infty, 
\]
and then for a typical $f \in (\lilmega, ||\cdot||_\omega)$ we have $\ad G_f = 2$. In particular, this holds in all little Hölder spaces.
\end{enumerate}
\end{thm}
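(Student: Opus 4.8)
Part (i) is essentially immediate: if $\lim_{t\to 0}\omega(t)/t < \infty$ then, since $\omega$ is concave with $\omega(0)=0$, the ratio $\omega(t)/t$ is decreasing, so it is bounded on $(0,1]$, which forces $\omega(t) \approx t$ near $0$ and hence $C^\omega[0,1]$ is exactly the Lipschitz class. A Lipschitz graph is a Lipschitz image of $[0,1]$ (via $x \mapsto (x,f(x))$), and Assouad dimension does not increase under Lipschitz maps, so $\ad G_f \le \ad [0,1] = 1$; the reverse inequality is trivial since $G_f$ contains non-degenerate arcs (or simply because $\ad G_f \ge \ubd G_f \ge 1$).

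For part (ii), the upper bound $\ad G_f \le 2$ is automatic as $G_f \subseteq \mathbb{R}^2$. So the task is to show the set $\mathcal{G} = \{f \in \lilmega : \ad G_f = 2\}$ is residual. The plan is the standard ``residual = countable intersection of open dense sets'' strategy tailored to Assouad dimension. For $n \in \mathbb{N}$ define
\[
U_n = \Big\{ f \in \lilmega : \exists\, x \in [0,1],\ \exists\, 0 < r < R \le 1/n \ \text{ with } \ N_r\big(B^o((x,f(x)),R)\cap G_f\big) > n\,(R/r)^{2 - 1/n} \Big\}.
\]
I would check that each $U_n$ is open in $(\lilmega, \|\cdot\|_\omega)$ — this uses that the relevant box-counting quantity, being witnessed by finitely many points at a fixed scale $r$, is stable under small $\omega$-norm (hence small sup-norm) perturbations, and that the witnessing configuration $x, r, R$ can be taken with $R$ strictly less than $1/n$ so there is room to perturb. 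Then $\bigcap_n U_n \subseteq \mathcal{G}$: if $f \in \bigcap_n U_n$ then for every $\beta < 2$ and every $C$ the Assouad condition fails at arbitrarily small scales, so $\ad G_f \ge \beta$ for all $\beta < 2$.

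The main work, and the main obstacle, is density of each $U_n$: given an arbitrary $g \in \lilmega$ and $\varepsilon > 0$, I must produce $f$ with $\|f - g\|_\omega < \varepsilon$ lying in $U_n$. The idea is to add to $g$ a small, highly oscillatory bump supported on a tiny interval $[x_0, x_0 + \ell]$: on a scale $\ell$ that is far below the scale at which $g$'s $\omega$-modulus has ``kicked in'' (using $g \in \lilmega$, so $g$ is locally much smoother than $\omega$ permits), one has a full $\varepsilon$-worth of room in the $\omega$-seminorm budget to insert oscillations. Concretely, on $[x_0,x_0+\ell]$ let $f = g + \phi$ where $\phi$ is piecewise linear (a sawtooth) with many teeth: we want $\phi$ to have amplitude comparable to $\varepsilon\,\omega(\ell)$ so that $[\phi]_\omega \lesssim \varepsilon$, while the number of teeth is chosen so large that within the ball $B^o((x_0, g(x_0)), R)$ with $R = \omega(\ell)$-ish, the graph of $\phi$ (and hence of $f$, since $g$ varies negligibly at this scale) requires $> n (R/r)^{2-1/n}$ squares of side $r$ to cover, for a suitable very small $r$. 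The key inequality to verify is that the condition $\lim_{t\to0}\omega(t)/t = \infty$ guarantees $\omega(\ell)/\ell \to \infty$, which is exactly what gives the vertical extent $\varepsilon\,\omega(\ell)$ enough ``aspect ratio'' relative to the horizontal extent $\ell$ to force the local box-counting dimension up towards $2$ at scale $r \ll \varepsilon\,\omega(\ell)/(\text{number of teeth})$; one optimises the number of teeth and the ratio $r/R$ so the exponent exceeds $2 - 1/n$. Finally one checks $f \in \lilmega$: the perturbation $\phi$ is Lipschitz hence lies in $\lilmega$, and $\lilmega$ is a vector space, so $f = g + \phi \in \lilmega$, and $\|f-g\|_\omega = \|\phi\|_\omega = \|\phi\|_\infty + [\phi]_\omega < \varepsilon$ by construction. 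The delicate point throughout is bookkeeping the two competing demands on the sawtooth — small $\omega$-seminorm versus large local covering number — and confirming that the divergence $\omega(\ell)/\ell \to \infty$ is precisely the hypothesis that lets both be met simultaneously; this is where I expect the estimates to be most involved.
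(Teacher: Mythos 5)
Your overall architecture for part (ii) is the same as the paper's (a Baire category argument: open/dense or dense $G_\delta$ sets detected by packing counts that are stable under small sup-norm perturbations, plus density via a small sawtooth perturbation whose existence hinges on $\omega(t)/t\to\infty$; your remark that a Lipschitz sawtooth automatically lies in $\lilmega$ is a nice shortcut available in your one-bump-per-$U_n$ formulation). However, there are two concrete problems. First, in part (i) you invoke the principle that Assouad dimension does not increase under Lipschitz maps; this is false in general (Assouad dimension can increase even under orthogonal projections, which are $1$-Lipschitz). The conclusion is still immediate, but for the right reason: when $f$ is Lipschitz the map $x\mapsto(x,f(x))$ is \emph{bi-Lipschitz} onto $G_f$, and Assouad dimension is bi-Lipschitz invariant; alternatively one argues directly by counting, as the paper does in Proposition \ref{lip}.

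Second, and more seriously, the quantitative heart of your density step does not close as parametrised. For a sawtooth $\phi$ with teeth of width $w$ and tooth height $A$, the binding constraint on the seminorm is over a single tooth: by Lemma \ref{linIneq2}, $[\phi]_\omega\approx A/\omega(w)$, where $w$ is the \emph{tooth width}, not the support length $\ell$. So your choice ``amplitude $\approx\epsilon\,\omega(\ell)$ with many teeth'' forces $[\phi]_\omega\approx\epsilon\,\omega(\ell)/\omega(\ell/N)\to\infty$ as the number of teeth $N$ grows; the claim $[\phi]_\omega\lesssim\epsilon$ is simply wrong for a many-toothed sawtooth. Relatedly, you cannot extract the lower bound ``for a suitable very small $r$'': for a fixed configuration the achievable count in a window of size $R$ grows only like $(\mathrm{slope})\cdot R/r$, i.e.\ linearly in $1/r$, while the target $n(R/r)^{2-1/n}$ grows like $r^{-(2-1/n)}$, so sending $r\to0$ loses. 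The correct bookkeeping — which is exactly the paper's Lemma \ref{adDense} — is to take tooth height $A\leq\epsilon\,\omega(w)$, ball radius $R=A$, covering scale $r=w$, and at least $A/w$ teeth, so that the count is genuinely $\approx(R/r)^2$ at this \emph{specific} pair of scales with $R/r=A/w\leq\epsilon\,\omega(w)/w\to\infty$ as $w\to0$; this is precisely where the hypothesis $\lim_{t\to0}\omega(t)/t=\infty$ enters (in the paper: $r_k$ the tooth width, amplitude $R_k=(2k+1)r_k\leq\epsilon_k\omega(r_k)$). With that re-parametrisation your argument goes through and is essentially the paper's; without it, the two demands you identified (small $\omega$-seminorm versus large local count) are not simultaneously met by your stated choices.
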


The subtle yet dramatic difference between the Assouad and quasi-Assouad dimensions can be illustrated via the above results. Consider the modulus of continuity given by
\[
\omega(t) =t(1+|\log t|),
\]
 noting that  $\omega$ is concave  and
\[
\lim_{t \to 0} \frac{\log\omega(t)}{\log t}=1 \ \text{ and } \lim_{t \to 0} \frac{\omega(t)}{t} = \infty.
\]
By Corollary \ref{qAtyp} and Theorem \ref{adTyp}, a typical $f \in (\lilmega, ||\cdot||_\omega)$ has $\dim_{\textup{qA}} G_f = 1$ but $\ad G_f = 2$.

Finally, it is natural to ask what happens in the bigger space $(\cmega,||\cdot||_\omega)$. Without the assumption that the functions are locally flat, it might seem intuitive that the more a function oscillates the more likely it would have a greater Assouad dimension and spectrum. However, the construction used to prove Theorem \ref{specTyp} (ii) and Theorem \ref{adTyp} (ii) does not work here.  The problem lies in the part of the proof where we exhibit density.  For example, if a function in $\cmega$ was to oscillate maximally but somehow have Assouad dimension strictly less than 2, then we do not know how to approximate it in $||\cdot||_\omega$ by a function with Assouad dimension 2, since the change in norm of the approximating functions would need to be  large. We therefore pose the following questions.

\begin{ques}
For a concave $\omega$ with 
\[
\eta = \lim_{t \to 0} \frac{\log\omega(t)}{\log t}<1,
\]
 is it true that a typical $f \in (\cmega,||\cdot||_\omega)$ satisfies  
\[
\ad^{\theta} G_f = 2- \frac{\eta-\theta}{1-\theta}
\]
for $\theta \leq \eta$? 
\end{ques}
\begin{ques}
For a concave $\omega$ with
\[
\lim_{t \to 0} \frac{\omega(t)}{t} = \infty, 
\]
is it true that a typical $f \in (\cmega,||\cdot||_\omega)$ satisfies $\ad G_f = 2$?
\end{ques}

\section{Proofs} \label{secProofs}
In this section we prove the results stated in Section 2. 

\subsection{Preliminaries} \label{proofPrelim}
Although the paper concerns largely with functions with concave modulus of continuity, we begin with a case where $\omega$ is not necessarily concave.

\begin{prop} \label{omgCnst}
Let $\omega$ be a modulus of continuity satisfying    
\[
\eta = \limsup_{t \to 0} \frac{\log\omega(t)}{\log t} >1.
\]
Then $\cmega$ contains only constant functions.
\end{prop}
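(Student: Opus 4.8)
The plan is to show that a function $f\in\cmega$ is forced to be constant because there are arbitrarily small scales at which $\omega$ is much smaller than the identity, and a chaining argument then makes any increment of $f$ vanish.

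First I would unpack the hypothesis. Since $\eta=\limsup_{t\to0}\frac{\log\omega(t)}{\log t}>1$, fix $\delta$ with $0<\delta<\eta-1$. By definition of $\limsup$ there is a sequence $t_n\to0^+$ with $t_n<1$ and $\frac{\log\omega(t_n)}{\log t_n}>1+\delta$ for all $n$; since $\log t_n<0$, this rearranges to $\omega(t_n)<t_n^{1+\delta}$ (in particular $\omega(t_n)<1$ is automatic). So along the scales $t_n$ the modulus $\omega$ decays super-linearly.

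Next, fix $f\in\cmega$, write $M=[f]_\omega<\infty$, and take arbitrary $x,y\in[0,1]$ with $x<y$; the goal is $f(x)=f(y)$. For each large $n$ put $k_n=\lceil(y-x)/t_n\rceil$, so that the uniform partition $x_i=x+\tfrac{i}{k_n}(y-x)$, $i=0,\dots,k_n$, has mesh $(y-x)/k_n\le t_n$. Chaining along this partition, using the definition of $[f]_\omega$ on each step together with the monotonicity of $\omega$, gives
\[
|f(x)-f(y)|\le\sum_{i=1}^{k_n}|f(x_{i-1})-f(x_i)|\le k_n\,M\,\omega\!\left(\tfrac{y-x}{k_n}\right)\le k_n\,M\,\omega(t_n)\le k_n\,M\,t_n^{1+\delta}.
\]
Once $t_n<y-x$ we have $k_n\le (y-x)/t_n+1\le 2(y-x)/t_n$, so the right-hand side is at most $2M(y-x)\,t_n^{\delta}$, which tends to $0$ as $n\to\infty$. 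Hence $f(x)=f(y)$, and as $x,y$ were arbitrary, $f$ is constant.

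I do not anticipate a real obstacle here: the only point needing (slight) care is replacing the real number $(y-x)/t_n$ by an integer $k_n$, which is why I take the ceiling and invoke that $\omega$ is non-decreasing to pass from $\omega((y-x)/k_n)$ to $\omega(t_n)$; everything else is immediate from the definitions.
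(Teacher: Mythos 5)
Your proposal is correct and follows essentially the same argument as the paper: extract scales $t_n\to 0$ with $\omega(t_n)<t_n^{1+\delta}$ from the limsup hypothesis, chain $f$ along a uniform partition of $[x,y]$ with mesh at most $t_n$, and let $n\to\infty$. The only cosmetic difference is that you take $k_n=\lceil(y-x)/t_n\rceil$ while the paper picks $N_k$ with $t_k/2\le (y-x)/N_k\le t_k$; both give the same bound of order $(y-x)\,t_n^{\delta}$.
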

\begin{proof}
Let $f \in \cmega$ and let $x,y \in [0,1]$ be distinct, where we assume without loss of generality that $x<y$. Let $\delta \in (0, \eta-1)$, and let $(t_k) \subseteq [0,1]$ be decreasing with $t_1 \leq |x-y|$ and $t_k \to 0$ such that
\[
\frac{\log \omega(t_k)}{\log t_k} > 1+\delta
\]
for all $k$, which rearranges to
\[
\omega(t_k) < t_k^{1+\delta}.
\]
Choose $N_k \in \bb{N}$ so that
\[
\frac{t_k}{2}\leq \frac{|x-y|}{N_k} \leq t_k.
\]
Now, let $x=x_0<x_1<...<x_{N_k}=y$ be evenly spaced. Because $f \in \cmega$, the triangle inequality and the monotonicity of $\omega$ yield
\begin{align*}
|f(x)-f(y)| &\leq |f(x)-f(x_1)| + ... + |f(x_{N_k-1})-f(y)| \\
&\leq [f]_\omega \sum_{i=1}^{N_k} \omega(|x_{i-1}-x_i|) \\
&= [f]_\omega  \sum_{i=1}^{N_k} \omega\bigg(\frac{|x-y|}{N_k}\bigg) \\
&\leq [f]_\omega \cdot N_k \cdot \omega(t_k)  \\
&\leq [f]_\omega \cdot N_k \cdot t_k^{1+\delta} \\
&\leq [f]_\omega \cdot 2|x-y| \cdot  t_k^\delta \\
&\to 0
\end{align*}
as $k \to \infty$. Thus, $f(x)=f(y)$. Since $x, y \in [0,1]$ were arbitrarily chosen, we conclude that $f$ is in fact constant.     
\end{proof}

Next we introduce a technical lemma concerning concave moduli of continuity which will be used in several proofs below.

\begin{lma} \label{linIneq2}
Let $\omega$ be a concave modulus of continuity. If $f \in C^0[0,1]$ is affine over some $[a,b] \subseteq [0,1]$ where $a<b$, then
\[
\sup_{\substack{x,y \in [a,b] \\ x \neq y}} \frac{|f(x)-f(y)|}{\omega(|x-y|)} = \frac{|f(a)-f(b)|}{\omega(|a-b|)}.
\]
\end{lma}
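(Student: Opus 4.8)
The plan is to reduce the whole statement to a one-variable monotonicity fact about $t\mapsto t/\omega(t)$. First I would note that since $f$ is affine on $[a,b]$ there is a constant $s=\frac{|f(a)-f(b)|}{b-a}\geq 0$ (the absolute slope) with $|f(x)-f(y)|=s\,|x-y|$ for all $x,y\in[a,b]$. Hence for distinct $x,y\in[a,b]$, putting $t=|x-y|\in(0,b-a]$, the quotient under consideration equals $s\,t/\omega(t)$; conversely every $t\in(0,b-a]$ arises as $|x-y|$ for suitable $x,y\in[a,b]$. Therefore
\[
\sup_{\substack{x,y\in[a,b]\\ x\neq y}}\frac{|f(x)-f(y)|}{\omega(|x-y|)}=s\,\sup_{0<t\leq b-a}\frac{t}{\omega(t)},
\]
so it suffices to show $t\mapsto t/\omega(t)$ is non-decreasing on $(0,b-a]$: the supremum on the right would then be attained at $t=b-a$ and equal $s(b-a)/\omega(b-a)=|f(a)-f(b)|/\omega(|a-b|)$, which is exactly the claimed value.

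Next I would establish the monotonicity, which is equivalent to $t\mapsto \omega(t)/t$ being non-increasing and is the standard consequence of concavity together with $\omega(0)=0$. Concretely, for $0<t_1<t_2\leq 1$ write $t_1=\lambda t_2+(1-\lambda)\cdot 0$ with $\lambda=t_1/t_2\in(0,1)$; concavity of $\omega$ on $[0,t_2]\subseteq[0,1]$ gives $\omega(t_1)\geq \lambda\,\omega(t_2)+(1-\lambda)\,\omega(0)=\lambda\,\omega(t_2)$, i.e.\ $\omega(t_1)/t_1\geq \omega(t_2)/t_2$. Since $\omega(t)>0$ for $t>0$ by the definition of a modulus of continuity, all these quotients are well defined and positive, so dividing is legitimate.

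Combining the two steps completes the argument; the degenerate case $s=0$ (that is, $f$ constant on $[a,b]$) is trivial because both sides of the identity vanish. I do not expect a genuine obstacle: the only points requiring a little care are that the supremum over pairs $(x,y)$ genuinely collapses to a supremum over $t=|x-y|$ ranging over all of $(0,b-a]$ (using that $f$ is affine, so the quotient depends on $x,y$ only through $|x-y|$), and that concavity is invoked only on subintervals of $[0,1]$, which lie in the domain of $\omega$.
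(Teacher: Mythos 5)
Your proposal is correct and rests on the same key fact as the paper's proof, namely the concavity inequality $\omega(\lambda t)\geq \lambda\,\omega(t)$ for $\lambda\in[0,1]$ (equivalently, $t\mapsto \omega(t)/t$ is non-increasing), applied after observing that the affine structure makes the quotient depend only on $|x-y|$. The paper phrases this via $x=a+\gamma_1(b-a)$, $y=a+\gamma_2(b-a)$ rather than via monotonicity of $t/\omega(t)$, but the argument is essentially identical.
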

\begin{proof}
Let $x,y \in [a,b]$ be distinct. We may write $x = a+\gamma_1(b-a)$ and $y = a+\gamma_2(b-a)$ for some $\gamma_1,\gamma_2 \in [0,1]$. Since $f$ is affine,
\[
\frac{|f(x)-f(y)|}{\omega(|x-y|)} = \frac{|\gamma_1-\gamma_2||f(a)-f(b)|}{\omega(|\gamma_1-\gamma_2||a-b|)} \leq \frac{|f(a)-f(b)|}{\omega(|a-b|)},
\]
where $|\gamma_1-\gamma_2|\omega(|a-b|) \leq \omega(|\gamma_1-\gamma_2||a-b|)$ by the concavity of $\omega$.
\end{proof}

\subsection[\texorpdfstring{Assouad spectrum of typical little $\omega$ graphs}
                        {Assouad spectrum of typical little omega graphs}]
        {Assouad spectrum of typical little $\omega$ graphs} \label{specProofs}
        
In this section we consider mainly the Assouad spectrum. First we consider the case where $\omega$ is any modulus of continuity.

\begin{proof}[Proof of Proposition \ref{omgBound}]
First, let $\theta < \eta$ and $0<\epsilon < (\eta-\theta)/(1-\theta)$. Choose $r_0>0$ such that for all $r \in (0,r_0]$ 
\begin{equation} \label{omgEq1}
\frac{\log\omega(r)}{\log r} - \theta \geq \epsilon(1-\theta),    
\end{equation}
from which we obtain
\[
\frac{\omega(r)}{r} \leq \bigg(\frac{r^\theta}{r}\bigg)^{1-\epsilon}    
\]
after rearranging. If $r_0 \leq r < 1$, then
\[
\omega(r)r^{\epsilon(\theta-1)-\theta} \leq \omega(1)r_0^{\epsilon(\theta-1)-\theta}.
\]
Hence
\begin{equation} \label{omgEq2}
\frac{\omega(r)}{r} \leq C\bigg(\frac{r^\theta}{r}\bigg)^{1-\epsilon}    
\end{equation}
for all $0<r<1$, where $C = \max \{1, \ \omega(1)r_0^{\epsilon(\theta-1)-\theta} \}< \infty$. Let $f \in \cmega$ and $0<r<1$. Let $x \in [0,1]$ and write $\bm{x} = (x,f(x))$. The number of $r$-columns of the form $[nr,(n+1)r] \times \bb{R}$ for some integer $n$ which the square $Q(\bm{x},r^\theta)$ intersects is at most $r^{\theta-1}+2$. Since 
\[
|f(y) - f(z)| \leq [f]_\omega\omega(|y-z|)\leq [f]_\omega \omega(r)
\]
for all $y,z \in [nr,(n+1)r]$, the number of $r$-squares $G_f$ intersects in each $r$-column is at most $\frac{[f]_\omega \omega(r)}{r} +2$. Therefore
\begin{align*}
N_r'(Q(\bm{x},R) \cap G_f) &\leq \bigg(\frac{r^\theta}{r} + 2\bigg) \bigg(\frac{[f]_\omega \omega(r)}{r} +2\bigg)   \\
&\leqs \frac{r^\theta}{r} \cdot \frac{\omega(r)}{r} \\
&\leqs \frac{r^\theta}{r}\cdot \bigg( \frac{r^\theta}{r}\bigg)^{1-\epsilon} \\
&= \bigg( \frac{r^\theta}{r}\bigg)^{2-\epsilon}
\end{align*}
by (\ref{omgEq2}), which shows $\ad^{\theta} G_f \leq 2-\epsilon$. Since we chose $\epsilon < (\eta-\theta)/(1-\theta)$, we obtain $\ad^{\theta} G_f \leq 2-\frac{\eta-\theta}{1-\theta}<2$, as required.    

The bound for the upper box dimension follows from the continuity of the Assouad spectrum, where
\[
\ubd G_f = \lim_{\theta \to 0} \ad^\theta G_f \leq \lim_{\theta \to 0} 2-\frac{\eta-\theta}{1-\theta} = 2-\eta.
\]
\end{proof}

\begin{proof}[Proof of Lemma \ref{omgCcvLim}]
Let
\[
\eta = \liminf_{t \to 0} \frac{\log \omega(t)}{\log t}
\]
and observe that $\eta \geq 0$. Let $\epsilon>0$ and $T \in (0,1]$; choose $t \in (0,T]$ such that 
\[
\frac{\log \omega(t)}{\log t} \leq \eta + \epsilon.
\]
Let $\alpha \in (0,1)$ so that $\alpha t \in (0,t)$. Then by the concavity of $\omega$,
\[
\frac{\log \omega(\alpha t)}{\log \alpha t} \leq \frac{\log \alpha\omega(t)}{\log\alpha t} \leq \eta+\epsilon + \frac{\log \alpha}{\log \alpha+\log t} \to \eta + \epsilon
\]
as $T \to 0$. Since $\epsilon>0$ was arbitrarily chosen, we conclude that
\[
\limsup_{t \to 0} \frac{\log \omega(\alpha t)}{\log \alpha t} \leq \eta
\]
for all $\alpha \in (0,1)$. This proves the claim.
\end{proof}

We prove Theorem \ref{specTyp} in two parts: first we show the desired set is dense, then we check it is $G_\delta$. 

For a function $f:[0,1] \to \bb{R}$ and an interval $I \subseteq [0,1]$, we refer to
\[
\sup_{x, y \in I} |f(x)-f(y)|
\]
as the \emph{oscillation of $f$ over $I$}.

\begin{lma} \label{omgSpecDense}
Let $\omega$ be a concave modulus of continuity where the limit
\[
\eta = \lim_{t \to 0} \frac{\log\omega(t)}{\log t}
\]
satisfies $\eta < 1$. Then for $\theta < \eta$, the set
\[
\bigg\{f \in \lilmega \ | \ \ad^{\theta} G_f = 2- \frac{\eta-\theta}{1-\theta}\bigg\}
\]
is dense in $(\lilmega, ||\cdot||_\omega)$.
\end{lma}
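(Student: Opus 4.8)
\textbf{Proof proposal for Lemma \ref{omgSpecDense}.}

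The plan is to show that an arbitrary $g \in \lilmega$ can be approximated in $\|\cdot\|_\omega$ by a function $f$ whose graph attains the extremal Assouad spectrum value. Since the upper bound $\ad^\theta G_f \le 2 - \frac{\eta-\theta}{1-\theta}$ holds for \emph{all} $f \in \cmega$ by Proposition \ref{omgBound}, it suffices to construct $f$ close to $g$ for which the \emph{lower} bound $\ad^\theta G_f \ge 2 - \frac{\eta-\theta}{1-\theta}$ also holds. Because $C^{\alpha'}[0,1] \subseteq \lilmega$-type density arguments are available, I would first reduce to approximating a piecewise affine (or otherwise very smooth) $g$, using that such functions are dense in $\lilmega$; replacing $g$ by a piecewise affine approximant changes $\|g\|_\omega$ by an arbitrarily small amount by Lemma \ref{linIneq2}. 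So fix $\epsilon > 0$ and a piecewise affine $g$, and aim to build $f$ with $\|f - g\|_\omega < \epsilon$ and the right spectrum.

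The core construction is to add to $g$ a small, highly oscillatory perturbation $\psi$ supported on a tiny subinterval, engineered so that at one location $\bm{x} \in G_f$ and along a sequence of scales $r_k \to 0$, the graph of $f$ inside a box $Q(\bm{x}, r_k^\theta)$ contains roughly $\left(\frac{r_k^\theta}{r_k}\right)^{2 - \frac{\eta-\theta}{1-\theta}}$ disjoint $r_k$-squares. The natural device is a lacunary sum of ``sawtooth'' or bump functions at geometrically decreasing horizontal scales $\lambda_k$, with amplitudes $a_k \approx \delta\,\omega(\lambda_k)$ for a small constant $\delta$: this keeps $[\psi]_\omega \le \delta$ (using concavity of $\omega$ and Lemma \ref{linIneq2} on the affine pieces of the sawtooth), keeps $\|\psi\|_\infty$ small, and ensures $f = g + \psi \in \lilmega$ since each term individually has small local oscillation ratio at fine scales and the tails of the sum are controlled. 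The key quantitative point is the choice relating the box scale $r_k^\theta$, the counting scale $r_k$, and the oscillation scale $\lambda_k$: one wants $r_k^\theta$ comparable to $\lambda_k$ (so the box sees one ``period'' worth of structure horizontally, contributing $\lambda_k/r_k = r_k^{\theta-1}$ columns) and the vertical extent $a_k \approx \omega(\lambda_k) \approx \lambda_k^\eta$ filled with oscillations of horizontal period $\approx r_k$, contributing about $a_k/r_k \approx r_k^{\eta/(1-\theta) \cdot \text{(something)}}$ further squares per column; optimizing the exponent arithmetic should reproduce exactly $2 - \frac{\eta-\theta}{1-\theta}$. Here the hypothesis $\theta < \eta$ guarantees this exponent is genuinely in $(1,2)$ and that the required amplitudes are small enough to be absorbed into the $\epsilon$-budget.

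I would organise the write-up as: (1) fix $\epsilon$, reduce to piecewise affine $g$; (2) define the perturbation $\psi$ via an explicit lacunary family of scaled tent functions with carefully chosen $\lambda_k$ (a rapidly decreasing sequence, e.g. $\lambda_{k+1} \le \lambda_k^2$) and amplitudes $a_k \asymp \epsilon\,\omega(\lambda_k)$, set $f = g + \psi$; (3) verify $f \in \lilmega$ and $\|f-g\|_\omega \le \|f - g\|_\infty + [\psi]_\omega < \epsilon$, the seminorm bound coming from concavity plus Lemma \ref{linIneq2} and disjoint-ish supports / telescoping for overlapping scales, and the little-$\omega$ property from the fact that $\lom{\lambda_k} \to \eta$ and the terms are lacunary; (4) for each $k$ exhibit a point $\bm{x}_k$ and scale $r_k$ (with $r_k^\theta \asymp \lambda_k$, $r_k \to 0$) at which $Q(\bm{x}_k, r_k^\theta) \cap G_f$ meets at least $c\,(r_k^\theta/r_k)^{2 - (\eta-\theta)/(1-\theta)}$ squares of the $r_k$-mesh, using the oscillation of $\psi$ at scale $\lambda_k$ together with the near-affineness of $g$ on that scale; (5) conclude $\ad^\theta G_f \ge 2 - \frac{\eta-\theta}{1-\theta}$, hence equality by Proposition \ref{omgBound}.

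The main obstacle will be step (4) combined with the bookkeeping in step (3): one must simultaneously make the perturbation oscillate \emph{maximally} at the relevant scales (to force the box-counting lower bound) while keeping $[\psi]_\omega$ uniformly below $\epsilon$ and preserving membership in the \emph{little} space $\lilmega$ — these pull in opposite directions, and the reconciliation relies delicately on $\eta < 1$ and on $\omega$ being concave with $\lom{t} \to \eta$ (so that $\omega(t) \approx t^{\eta + o(1)}$ in the precise averaged sense of Lemma \ref{omgCcvLim}). A secondary technical nuisance is that membership in $\lilmega$ is not closed under taking such perturbations for a fixed function, so the lacunarity of the $\lambda_k$ must be strong enough that at any small scale $t$, at most one or two ``active'' terms contribute to the oscillation ratio, letting their individual smallness (ratio $\le \delta$) pass to the sum.
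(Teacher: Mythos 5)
Your overall architecture (perturb a given function by small piecewise-affine sawtooth oscillations at a sequence of scales, control the $\omega$-seminorm via concavity and Lemma \ref{linIneq2}, count columns times squares-per-column at the scales $r_k$, and close with Proposition \ref{omgBound}) is exactly the paper's, but the quantitative heart of your step (4) fails as stated. If the teeth have horizontal width $r_k$ and you keep $[\psi]_\omega\leq\delta$, then the oscillation of $f=g+\psi$ over any single $r_k$-column is at most about $(\delta+o(1))\,\omega(r_k)$ (the $o(1)$ from $g\in\lilmega$), so the number of $r_k$-squares the graph meets per column is capped at roughly $\delta\,\omega(r_k)/r_k$; it can never be $a_k/r_k$ with $a_k\approx\omega(\lambda_k)=\omega(r_k^{\theta})$. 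In fact your claimed count, $r_k^{\theta-1}\cdot\omega(r_k^{\theta})/r_k\approx r_k^{\theta+\theta\eta-2+o(1)}$, corresponds to an exponent $(2-\theta-\theta\eta)/(1-\theta)$, which is strictly greater than $2$ whenever $\eta<1$ — impossible for a planar set, and in particular incompatible with the upper bound you yourself invoke. The correct bookkeeping (the paper's) takes teeth of height $\epsilon_k\omega(r_k)$ and total vertical extent also $\epsilon_k\omega(r_k)\leq r_k^{\theta}$ (so the oscillation fits inside the box $Q(\bm{x}_k,r_k^{\theta})$), choosing $r_k$ via Lemma \ref{omgCcvLim} so that $r_k^{\eta+\delta}\leq\epsilon_k\omega(r_k)\leq r_k^{\theta}$; then columns $\approx r_k^{\theta-1}$ times per-column $\approx\epsilon_k\omega(r_k)/r_k\geq r_k^{\eta+\delta-1}$ already yields $(r_k^{\theta}/r_k)^{2-\frac{\eta-\theta+\delta}{1-\theta}}$, and letting $\delta\to0$ gives the sharp exponent. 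No choice of amplitude of order $\omega(\lambda_k)$ is needed, nor is one admissible.

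The second genuine gap is membership in the little space in your step (3). Keeping $[\psi]_\omega\leq\delta$ with $\delta$ a fixed fraction of the $\epsilon$-budget suffices for $\|f-g\|_\omega<\epsilon$, but not for $f\in\lilmega$: if the stage-$k$ teeth have height comparable to $\delta\,\omega(r_k)$, then at scales $t\asymp r_k\to0$ the local ratio $\sup_{|x-y|\leq t}|\psi(x)-\psi(y)|/\omega(|x-y|)$ stays $\asymp\delta$ and does not tend to $0$, no matter how lacunary the $\lambda_k$ are; so individual smallness of each term does not "pass to the sum" in the sense required by the definition of $\lilmega$. The fix, as in the paper, is to make the amplitude factors themselves decay, i.e. teeth of height $\epsilon_k\omega(r_k)$ with $\epsilon_k\to0$, placing the stage-$k$ perturbation on pairwise disjoint intervals $I_k$ and using $f\in\lilmega$ to control the background function at those scales; the lost factor $\epsilon_k$ is subpolynomial in $r_k$, so the lower bound for $\ad^{\theta}$ survives. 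A minor further point: your preliminary reduction to piecewise affine $g$ is neither necessary (the paper superposes the perturbation directly on an arbitrary $f\in\lilmega$) nor justified by Lemma \ref{linIneq2}, which says nothing about density of piecewise affine functions in $(\lilmega,\|\cdot\|_\omega)$; if you keep that reduction you must prove it separately.
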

\begin{proof}
The limit $\eta$ exists by Lemma \ref{omgCcvLim}. Let $f \in \lilmega$, $\theta < \eta$ and $\epsilon>0$. We may assume $\epsilon$ is small enough that $\omega(\epsilon) \leq 1$, so that $\omega(t) \leq 1$ for all $t \leq \epsilon$. Our aim is to show that there exists $\tilf \in \lilmega$ with $||f-\tilf||_\omega \leq \epsilon$ and $\ad^{\theta} G_{\tilf} \geq 2- \frac{\eta-\theta}{1-\theta}$, then we will use Proposition \ref{omgBound} for the reverse inequality.

Let $\delta >0$. Let $(\epsilon_k)$ be a decreasing sequence of positive numbers with $\sum_k \epsilon_k \leq 1/4$ and $\epsilon_k \leq 28^{-1}\epsilon$ for all $k$. Let $(m_k)$ be a sequence of odd positive integers where $m_k \geq (\epk/2)^{(\theta-1)/\theta}$ with the following properties. Write $r_k = m_k^{1/(\theta-1)}$ and $L_k = 2r_k^\theta$, notice that both $r_k, L_k \searrow 0$ with $L_k \leq \epk$. Since $\log\omega(t)/\log(t) \to \eta > \theta$ as $t \to 0$, we may assume each $m_k$ to be large enough so that 
\begin{equation} \label{omgSpecDenseEq}
\theta \leq \frac{\log\omega(r_k)+\log\epk}{\log r_k} \leq \eta+\delta, 
\end{equation}
which rearranges to
\[
r_k^{\eta+\delta}\leq \epk\omega(r_k) \leq r_k^\theta.
\]
Additionally, because $f \in \lilmega$ we may choose $m_k$ such that if $x, y \in [0,1]$ are distinct with $|x-y| \leq L_k$, then
\begin{equation} \label{cond41}
\Fo{x}{y} \leq \epsilon_k.   
\end{equation}
It follows that for such $x, y,$
\begin{equation} \label{cond42}
|f(x)-f(y)| \leq \epk\omega(L_k) \leq \epsilon_k.    
\end{equation}

Consider the intervals $I_1 = [0, L_1]$ and $I_k = [2(L_1+...+L_{k-1}), 2(L_1+...+L_{k-1})+L_k]$ for $k \geq 2$. Observe that the $I_k$ are pairwise disjoint and $\cup_k I_k \subseteq [0,1]$. Denote the left endpoint of $I_k$ by $y_k$ and the right endpoint by $z_k$. We define a new function $\tilde{f}$ by modifying $f$ over $I_k$ so that $\tilf$ coincides with $f$ on $[0,1] \setminus \cup_k I_k$.

Let $k \in \bb{N}$ and assume without loss of generality that $f(y_k) \geq f(z_k)$. Let $y_k'= (y_k+z_k)/2$ and $x_k = (y_k+y_k')/2$. Define $\tilde{f}$ as follows. Let $\tilde{f}(y_k) = f(y_k)$, $\tilde{f}(z_k)=f(z_k)$, $\tilde{f}(x_k) = f(y_k)-\epk\omega(r_k)/2$, $\tilde{f}(y_k') = f(y_k)-\epk\omega(r_k)$ and interpolate linearly between $y_k'$ and $z_k$. Now $|y_k-y_k'|= r_k^\theta = m_k^{\theta/(\theta-1)}$, thus we divide $[y_k,y_k']$ into $r_k^{\theta-1}=m_k$ sub-intervals of length $r_k$. Furthermore, let $\tilde{f}$ be such that:
\begin{itemize}
\item $\tilf$ is continuous;

\item $\tilde{f}$ is affine over each sub-interval;

\item the oscillation of $\tilde{f}$ over each sub-interval equals $\epk\omega(r_k)$;

\item $\tilde{f}(x) \in [\tilde{f}(y_k), \tilde{f}(y_k')]$ for all $x \in [y_k,y_k']$.
\end{itemize}

\begin{figure}[ht]
\centering
\includegraphics[width=0.66\textwidth]{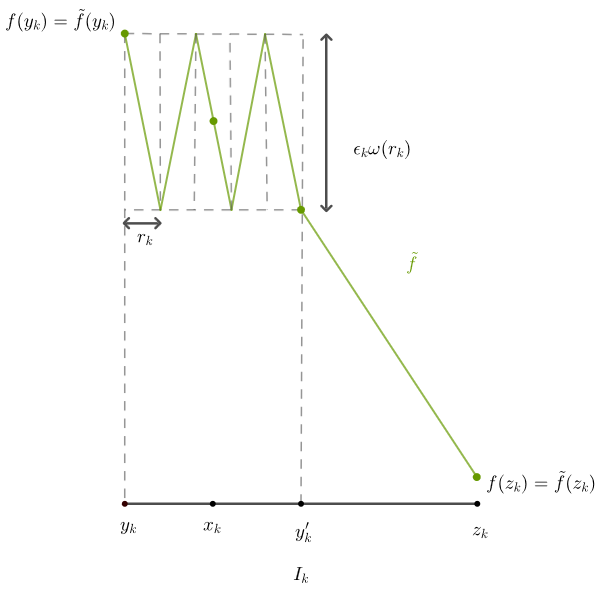}
\caption{$\tilf$ over $I_k$} 
\label{omegaFig}
\end{figure}

see Figure \ref{omegaFig}. Now for distinct $x,y \in [y_k,y_k']$, if $|x-y| > r_k$ then
\[
\tFo{x}{y} \leq \frac{\epk\omega(r_k)}{\omega(r_k)} = \epk;
\]
otherwise, $|x-y| \leq r_k$ and $x,y$ belong to either the same sub-interval of length $r_k$ or two neighbouring ones. In the former case, Lemma \ref{linIneq2} implies
\[
\tFo{x}{y} \leq \frac{\epk\omega(r_k)}{\omega(r_k)} = \epk;
\]
whereas for the latter case, denote by $z$ the shared endpoint of the two neighbouring sub-intervals and assume $x<z<y$. Then since $\omega$ is increasing and $|x-y|= |x-z|+|z-y|$, we have
\[
\max\{\omega(|x-z|), \omega(|z-y|)\} \leq \omega(|x-y|)
\]
and Lemma \ref{linIneq2} again yields
\begin{align*}
\tFo{x}{y} &\leq \frac{|\tilf(x)-\tilf(z)|+ |\tilf(z)-\tilf(y)|}{\omega(|x-y|)} \\ 
&\leq \tFo{x}{z} + \tFo{z}{y} \\
& \leq \frac{\epk\omega(r_k)}{\omega(r_k)} + \frac{\epk\omega(r_k)}{\omega(r_k)}  \\
& = 2\epk.
\end{align*} 
Note that $|y_k-z_k|=L_k$, so by (\ref{cond42})
\[
|\tilf(y_k')-\tilf(z_k)| \leq |f(y_k)-f(z_k)| + \epk\omega(r_k) \leq \epsilon_k \omega(L_k) + \epk\omega(r_k).
\]
Lemma \ref{linIneq2} then shows that for $x, y \in [y_k',z_k]$ distinct
\begin{align*}
\tFo{x}{y} &\leq \tFo{y_k'}{z_k} \\
&\leq \frac{\epsilon_k \omega(L_k) + \epk\omega(r_k)}{\omega\big(\frac{L_k}{2}\big)}  \\
&\leq \frac{2\epk \omega(L_k)}{\omega(L_k)} + \frac{2\epk \omega(L_k)}{\omega(L_k)} \\
&\leq 2\epk + 2\epk \\
&=4\epk,
\end{align*}
where we used that $\omega$ is concave and increasing. Similarly if $x,y \in I_k$ with $x < y_k'<y$, using the same argument we see that
\begin{align*}
\tFo{x}{y} &\leq \frac{|\tilf(x)-\tilf(y_k')|+ |\tilf(y_k')-\tilf(y)|}{\omega(|x-y|)} \\ 
&\leq \tFo{x}{y_k'} + \tFo{y_k'}{y} \\
& \leq 2\epk + 4\epk  \\
& = 6\epk.
\end{align*}
Hence for $x,y \in I_k$
\begin{equation} \label{cond43}
\tFo{x}{y} \leq  6\epsilon_k,
\end{equation}
and since we assumed $\omega(\epsilon) \leq 1$,
\begin{equation} \label{cond44}
|\tilf(x) - \tilf(y)| \leq 6\epsilon_k \omega(|x-y|) \leq 6\epsilon_k \omega(\epsilon) \leq  6\epsilon_k.    
\end{equation}

We check $||f-\tilde{f}||_\omega \leq \epsilon$. Let $x \in [0,1]$ and suppose $f(x) \neq \tilde{f}(x)$, so we must have $x \in I_k$ for some $k$. Then
\[
|f(x) - \tilf(x)| \leq |f(x) - f(z_k)| + |\tilf(z_k) - \tilf(x)| \leq \epsilon_k + 6\epsilon_k \leq \frac{\epsilon}{2}
\]
by (\ref{cond42}) and (\ref{cond44}). Hence $||f-\tilde{f}||_\infty \leq \epsilon/2$. For $[f-\tilf]_\omega$, write $h = f-\tilf$. Let $x, y \in [0,1]$, where we suppose $f(x) \neq \tilde{f}(x)$ and $f(y) \neq \tilde{f}(y)$. Then there must exist $k,l \in \bb{N}$ such that $x \in I_k$ and $y \in I_l$. Assume further that $k<l$. Then by (\ref{cond41}) and (\ref{cond43})
\begin{align*}
\Ho{x}{y} &= \frac{|h(x)- h(z_k) + h(y_l)-h(y)|}{\omega(|x-y|)} \\
& \leq \frac{|f(x) - f(z_k)|}{\omega(|x-y|)} + \frac{|\tilf(z_k) - \tilf(x)|}{\omega(|x-y|)} +  \frac{|f(y_l) - f(y)|}{\omega(|x-y|)} +  \frac{|\tilf(y) - \tilf(y_l)|}{\omega(|x-y|)}  \\
& \leq \Fo{x}{z_k} + \tFo{z_k}{x} + \Fo{y_l}{y} + \tFo{y}{y_l}\\
& \leq \epk + 6\epk + \epsilon_l + 6\epsilon_l \\
& \leq \frac{\epsilon}{2},
\end{align*}
since $h(z_k)=h(y_l)=0$. Other cases where e.g., $k=l$ or $\tilf(x)=f(x)$ are similar. Hence $[f-\tilf]_\omega= [h]_\omega \leq \epsilon/2$ and 
\[
||f-\tilf||_\omega= ||f-\tilf||_\infty + [f-\tilf]_\omega \leq \frac{\epsilon}{2}+\frac{\epsilon}{2}=\epsilon
\]
as desired. 

Next we check $\tilf \in \lilmega$. Let $t \in (0, L_1]$ and define $m$ to be the largest integer such that $t \leq L_m$. Let $x,y \in [0,1]$ with $x<y$ and $|x-y| \leq t$. Suppose $f(x) \neq \tilde{f}(x)$ and $f(y) \neq \tilde{f}(y)$. There must exist $k,l \in \bb{N}$ such that $x \in I_k$ and $y \in I_l$, and we suppose further that $k<l$. Since $L_k \leq |x-y| \leq L_m$, we must have $k \geq m$. Again by (\ref{cond41}) and (\ref{cond43})
\begin{align*}
\tFo{x}{y} &\leq \frac{|\tilf(x)-\tilf(z_k)|+|\tilf(z_k)-\tilf(y_l)|+|\tilf(y_l)-\tilf(y)|}{\omega(|x-y|)} \\
&\leq \tFo{x}{z_k} + \tFo{z_k}{y_l} + \tFo{y_l}{y} \\
&\leq 6\epk + \epsilon_m + 6\epsilon_l \\
&\leq 13\epsilon_m \\
&\to 0
\end{align*}
as $t \to 0$. Other cases follow similarly (and are in fact simpler) and we conclude that $\tilf \in \lilmega$. 

Finally we compute $\ad^{\theta}G_{\tilf}$. Write $\bm{x_k}=\fx{\tilf}{x_k}$ and let $Q_k = Q(\bm{x_k},r_k^{\theta})$. Then each $Q_k$ intersects at least
\[
\frac{1}{2} \cdot \frac{r_k^{\theta}}{r_k}
\]
columns of the form $[nr_k,(n+1)r_k] \times \bb{R}$ for some $n \in \bb{Z}$ in the $r_k$-mesh grid. By construction, in each such column $G_{\tilf} \cap Q_k$ intersects at least
\[
\frac{1}{2} \cdot\frac{\epk\omega(r_k)}{r_k} 
\]
squares of side-length $r_k$ in the mesh grid, since $\epk\omega(r_k) \leq r_k^{\theta}$. Summing these up,
\begin{align*}
N_{r_k}(Q_k \cap G_{\tilf}) &\geqs \frac{r_k^\theta}{r_k} \cdot \frac{\epk\omega(r_k)}{r_k} \\
& \geq r_k^{\theta-1+\eta+\delta-1} \\
&= r_k^{(\theta-1)\big(2-\frac{\eta-\theta+\delta}{1-\theta}\big)}.
\end{align*}
Since $r_k^{\theta-1} \to \infty$ as $k \to \infty$, we get
\[
\ad^{\theta} G_{\tilf} \geq 2-\frac{\eta-\theta+\delta}{1-\theta}.
\]
Our choice of $\delta >0$ was arbitrary, so we can in fact conclude
\[
\ad^{\theta} G_{\tilf} \geq 2-\frac{\eta-\theta}{1-\theta}.
\]

For the upper bound, observe that $\omega$ being concave implies $\omega(1) < \infty$. Hence Proposition \ref{omgBound} applies if $\theta < \eta$ and $\ad^{\theta} G_{\tilf} \leq 2- \frac{\eta-\theta}{1-\theta}$, thus proving the desired result.
\end{proof}

\begin{lma} \label{specGdelta}
Let $\theta \in (0,1)$ and let $\omega$ be a modulus of continuity. Set
\[
\beta = \max_{f \in \lilmega} \ad^\theta G_f.
\]
Then the set $\{f \in \lilmega \ | \ \ad^{\theta} G_f = \beta \}$ is $G_\delta$.
\end{lma}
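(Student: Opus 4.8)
The plan is to write $\{f\in\lilmega : \ad^\theta G_f=\beta\}$ as a countable intersection of open sets. Since $\beta$ is the maximal value of $\ad^\theta G_f$ over $f\in\lilmega$, we have $\ad^\theta G_f\le\beta$ for every $f$, so this set coincides with $\{f\in\lilmega : \ad^\theta G_f\ge\beta\}$, and it is this ``$\ge$'' form I would work with. The first step is to reformulate the lower bound $\ad^\theta G_f\ge\beta$ purely in terms of a \emph{countable} family of ``scale witnesses''. Unwinding the definition of $\ad^\theta$ (for graphs, using balls centred at $(x,f(x))$ with $x\in[0,1]$), I would show that $\ad^\theta G_f\ge\beta$ if and only if for every $n,C,j\in\bb{N}$ there exist $r\in(0,1/j)$ and $x\in[0,1]$ with
\[
N_r\bigl(B^o((x,f(x)),r^\theta)\cap G_f\bigr)\;>\;C\,(r^{\theta-1})^{\,\beta-1/n}.
\]
Rewriting ``$>$'' as ``$\ge\floor{C(r^{\theta-1})^{\beta-1/n}}+1$'' and collecting the witnesses, this yields
\[
\{f\in\lilmega : \ad^\theta G_f\ge\beta\}=\bigcap_{n,C,j\in\bb{N}}\ \bigcup_{\substack{r\in(0,1/j)\\ x\in[0,1]}}\ \Bigl\{f\in\lilmega : N_r\bigl(B^o((x,f(x)),r^\theta)\cap G_f\bigr)\ge\floor{C(r^{\theta-1})^{\beta-1/n}}+1\Bigr\},
\]
a countable intersection. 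Since an arbitrary union of open sets is open, the whole problem reduces to showing that for each fixed $r\in(0,1)$, $x\in[0,1]$ and $M\in\bb{N}$ the set $\mathcal{W}(r,x,M)=\{f\in\lilmega : N_r(B^o((x,f(x)),r^\theta)\cap G_f)\ge M\}$ is open in $(\lilmega,||\cdot||_\omega)$.

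The key step is the openness of $\mathcal{W}(r,x,M)$, and here I would argue by perturbation. Membership of $f$ in $\mathcal{W}(r,x,M)$ means exactly that there are $t_1,\dots,t_M\in[0,1]$ whose graph points $p_i=(t_i,f(t_i))$ all lie in the \emph{open} ball $B^o((x,f(x)),r^\theta)$ and satisfy $|p_i-p_j|>2r$ for $i\ne j$ (the latter being precisely disjointness of the closed balls $B(p_i,r)$). These are finitely many \emph{strict} inequalities, so there is a single $\gamma>0$ by which they all hold with room to spare. If $g\in\lilmega$ satisfies $||f-g||_\omega<\gamma/3$, then $||f-g||_\infty<\gamma/3$, so the perturbed points $q_i=(t_i,g(t_i))$ and the centre $(x,g(x))$ each move by less than $\gamma/3$ from their $f$-counterparts; two applications of the triangle inequality then keep each $q_i$ inside $B^o((x,g(x)),r^\theta)$ and keep the $q_i$ pairwise more than $2r$ apart, so $N_r(B^o((x,g(x)),r^\theta)\cap G_g)\ge M$, i.e.\ $g\in\mathcal{W}(r,x,M)$. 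Hence $\mathcal{W}(r,x,M)$ is open (indeed already for the coarser topology induced by $||\cdot||_\infty$), and the proof is complete.

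Two points will need care. In the ``only if'' half of the reformulation one must know that the witnessing scale $r$ can always be taken below any prescribed $1/j$: if not, then $N_r(B^o((x,f(x)),r^\theta)\cap G_f)$ would be bounded on $[1/j,1)$ by a constant depending only on $j$ (a ball of radius $r^\theta<1$ admits only boundedly many disjoint closed $r$-balls once $r\ge 1/j$, by a volume count), and combining this with the assumed bound on $(0,1/j)$ — where $(r^{\theta-1})^{\beta-1/n}\ge 1$ — one could choose a single constant valid for all scales, contradicting $\ad^\theta G_f\ge\beta$. The other, and genuinely more delicate, point is the choice of counting function, which I expect to be the main obstacle: the ``naive'' version of the argument using the mesh-cube count $N'_r(Q(\cdot,r^\theta)\cap G_f)$ does \emph{not} work, because a graph can meet a closed mesh cube only along its boundary and then be pushed off it by an arbitrarily small perturbation, so $f\mapsto N'_r(\cdots)$ is not lower semicontinuous. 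Passing to the packing definition with the \emph{open} outer ball and \emph{strictly} separated centres is exactly what restores robustness, at the harmless cost of the multiplicative constants $C$, over which we have already quantified.
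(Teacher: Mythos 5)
Your proposal is correct and follows essentially the same route as the paper: write the set as a countable intersection (over $n$ and the packing constant) of unions over $x$ and $r$ of sets defined by a packing-count inequality, then prove each such set is open via a sup-norm perturbation argument exploiting the strictness of the inequalities defining an $r$-packing inside the open ball $B^o((x,f(x)),r^\theta)$. The only deviation is your extra parameter $j$ forcing $r<1/j$, which is correctly justified but redundant, since the definition of $\ad^\theta$ already quantifies over all $r\in(0,1)$ and the admissible exponents form an upper set.
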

\begin{proof}
Let $A$ be the set in the statement. Then since $\ad^{\theta} G_f \leq \beta$ for all $f \in \lilmega$, we can write
\begin{align*}
A &= \bigcap_{\epsilon >0} \bigcap_{C\geq0}\bigcup_{x \in [0,1]} \bigcup_{r>0} \bigg\{f \in \lilmega : N_r(B^o((x,f(x)),r^\theta) \cap G_f) \geq C\bigg(\frac{r^\theta}{r}\bigg)^{\beta-\epsilon} \bigg\} \\ 
&= \bigcap_{n \in \bb{N}} \bigcap_{C \in \bb{N}_0}\bigcup_{x \in [0,1]} \bigcup_{r>0} \bigg\{f \in \lilmega : N_r(B^o((x,f(x)),r^\theta) \cap G_f) \geq C\bigg(\frac{r^\theta}{r}\bigg)^{\beta-\frac{1}{n}} \bigg\},
\end{align*}
so we need only to show the inner most set in the above line is open. To this end, let $n \in \bb{N}$ and $C \in \bb{N}_0$ be given. Suppose $f \in \lilmega$ satisfies
\[
N_r(B^o((x,f(x)),r^\theta) \cap G_f) \geq C\bigg(\frac{r^\theta}{r}\bigg)^{\beta-\frac{1}{n}}
\]
for some $x \in [0,1]$ and $r>0$. Let $\{B(\fx{f}{x_i},r)\}_{i \in I}$ be a maximal $r$-packing of $B^o(\fx{f}{x},r^\theta) \cap G_f$, where $I$ is some finite index set. For each $i$, let $h_i>0$ be the minimum vertical distance of $\fx{f}{x_i}$ to the boundary of $B^o(\fx{f}{x},r^\theta)$ and let $h = \min_{i \in I} h_i$. Moreover, let
\[
d = \min_{i \neq j} |\fx{f}{x_i}-\fx{f}{x_j}|-2r\
\]
and notice that $d>0$. Then taking 
\[
\xi = \frac{1}{2}\min\{ h, d \},
\]
we see that if $g \in \lilmega$ satisfies $||f-g||_{\infty} < \xi$ then $(x_i,g(x_i)) \in B^o((x,g(x)),r^\theta) \cap G_g$ for all $i$ with the collection $P =\{B((x_i,g(x_i)),r)\}_{i \in I}$ disjoint. In other words, $P$ is an $r$-packing of $B^o((x,g(x)),r^\theta) \cap G_g$. Hence
\[
N_r(B^o((x,g(x)),r^\theta) \cap G_g) \geq N_r(B^o(\fx{f}{x},r^\theta) \cap G_f) \geq C\bigg(\frac{r^\theta}{r}\bigg)^{\beta-\frac{1}{n}}.
\]
Since $g \in B^o(f, \xi)$ only if $||f-g||_{\infty} < \xi$, it follows that
\[
B^o(f, \xi) \subseteq \bigg\{f_0 \in \lilmega : N_r(B^o(\fx{f_0}{x},r^\theta) \cap G_{f_0}) \geq C\bigg(\frac{r^\theta}{r}\bigg)^{\beta-\frac{1}{n}} \bigg\},
\]
which proves $A$ is a $G_\delta$ set. 
\end{proof}

\begin{proof}[Proof of Theorem \ref{specTyp}]
Again the limit $\eta$ exists by Lemma \ref{omgCcvLim}. For part (i), Proposition \ref{omgCnst} shows that if $\eta>1$ then $\cmega$ consists of only constant functions, whose graphs have Assouad dimension and spectra equal to $1$; so we assume $\eta = 1$. Let $f \in \cmega$ and $\theta \in (0,1)$, Proposition \ref{omgBound} implies $\ad^\theta G_f \leq 1$. On the other hand, let $0<r<1$ and consider $Q(\bm{0},r^\theta)$. Since $f$ is continuous,
\[
N_r'(Q(\bm{0},r^\theta) \cap G_f) \geq \floor{\frac{r^\theta}{2r}} \geqs \frac{r^\theta}{r}
\]
which gives $\ad^\theta G_f \geq 1$. Thus, $\ad^\theta G_f =1$. 

For part (ii), Proposition \ref{omgBound} and Lemma \ref{omgSpecDense} imply that for all $\theta < \eta$, the sets
\[
F_\theta := \bigg\{f \in \lilmega \ | \ \ad^{\theta} G_f = 2- \frac{\eta-\theta}{1-\theta}\bigg\}
\]
are dense in $\lilmega$. By continuity of the Assouad spectrum
\[
F_\eta = \{f \in \lilmega \ | \ \ad^{\eta} G_f = 2 \}
\]
is dense in $\lilmega$ where all $f \in F_\eta$ also satisfy $\ad G_f = 2$. Using \cite[Corollary 3.6]{FrYu1} which states that if $\ad^\phi E = \ad E$ then $\ad^{\phi'} E = \ad E$ for all $\phi' \in [\phi,1)$, it follows that $\ad^\theta G_f = \ad G_f= 2$ for all $\theta>\eta$ and $f \in F_\eta$. Hence for $\theta \geq  \eta$, the sets
\[
F_\theta := \{f \in \lilmega \ | \ \ad^{\theta} G_f = 2 \}
\]
all contain $F_\eta$, which means they are dense as well. Lemma \ref{specGdelta} then tells us that $F_\theta$ is residual for all $\theta \in (0,1)$.   One final simple but important step is needed to complete the proof.  So far we have only established that, for each $\theta$, a typical function satisfies the correct formula for $\theta$.  However, what we want to prove is that a typical function satisfies the correct formula for \emph{all $\theta$ simultaneously}.  Since there are an uncountable number of $\theta$ to consider, and the property of being co-meagre is not preserved under uncountable intersection, this could be a problem.  However, this problem is overcome by appealing to continuity of the Assouad spectrum.  In particular, it is determined by its values on a countable dense subset.  Indeed, 
\[
F : = \bigcap_{\theta \in (0,1)} F_\theta = \bigcap_{\theta \in (0,1) \cap \bb{Q}}  F_\theta.
\]
Therefore, $F$ is a countable intersection of residual sets and so is itself residual.  This completes the proof.
\end{proof}

\subsection[\texorpdfstring{Assouad dimension of typical little $\omega$ graphs}
                        {Assouad dimension of typical little omega graphs}]
        {Assouad dimension of typical little $\omega$ graphs} \label{adProofs}

We focus on the Assouad dimension of the graphs and prove Theorem \ref{adTyp} in this section. We start with the Lipschitz functions and write
\[
[f]_1 = \sup_{x,y \in [0,1]} \frac{|f(x)-f(y)|}{|x-y|}.
\]

\begin{prop} \label{lip}
Suppose $f:[0,1] \to \bb{R}$ is Lipschitz. Then $ \ad^\theta G_f= \ad G_f =1$ for all $\theta \in (0,1)$.     
\end{prop}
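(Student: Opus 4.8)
The plan is to prove the upper bound $\ad G_f \le 1$, since the lower bound $\ad^\theta G_f \ge 1$ for all $\theta$ (hence also $\ad G_f \ge 1$) follows exactly as in the proof of Theorem \ref{specTyp}(i): the continuity of $f$ forces $N_r'(Q(\bm{x},R)\cap G_f) \gtrsim R/r$ for any column of width $R$ containing a point of the graph. So the content is the matching upper estimate, and because $\ubd G_f \le \ad^\theta G_f \le \ad G_f$ and $\ad^\theta G_f \le \ad G_f$ for all $\theta$, it suffices to show $\ad G_f \le 1$.

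First I would fix $0<r<R$ and $\bm{x} = (x,f(x))$ with $x \in [0,1]$, and count the $r$-mesh cubes meeting $G_f \cap Q(\bm{x},R)$. The number of $r$-columns $[nr,(n+1)r]\times\bb{R}$ meeting $Q(\bm{x},R)$ is at most $R/r + 2$. On each such column $I$ of width $r$, the Lipschitz bound gives $|f(y)-f(z)| \le [f]_1 |y-z| \le [f]_1 r$ for all $y,z \in I$, so the graph over $I$ is contained in a vertical strip of height at most $[f]_1 r$, which meets at most $[f]_1 + 2$ cubes of the $r$-mesh. Multiplying, $N_r'(Q(\bm{x},R)\cap G_f) \le (R/r+2)([f]_1+2) \lesssim R/r$, where the implied constant depends only on $[f]_1$. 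This is precisely the estimate defining $\ad G_f \le 1$, and I would note that the same computation with $R = r^\theta$ gives $\ad^\theta G_f \le 1$ directly, though this also follows from $\ad^\theta G_f \le \ad G_f$.

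There is no real obstacle here — the argument is a direct and simpler version of the column-counting already carried out in the proof of Proposition \ref{omgBound} (with $\omega(t)=t$ and no need to pass through the inequality \eqref{omgEq2}). The only point worth a word of care is that $[f]_1 < \infty$ by hypothesis, so the per-column bound is uniform in the column, and that one should use the cube-counting version $N_r'$ (permitted by the remark in Section 1.4 that $N_r$ may be replaced by $N_r'$ in all the dimension definitions) to make the counting clean. Combining the two inequalities yields $\ad^\theta G_f = \ad G_f = 1$ for all $\theta \in (0,1)$, as claimed.
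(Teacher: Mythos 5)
Your argument is correct and essentially identical to the paper's: the same column-counting bound $N_r'(Q(\bm{x},R)\cap G_f)\leq (R/r+2)([f]_1+2)\lesssim R/r$ gives $\ad G_f\leq 1$, and the lower bound is the same continuity argument, which the paper simply imports by citing Theorem \ref{specTyp}(i) before sandwiching $1=\ad^\theta G_f\leq \ad G_f\leq 1$.
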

\begin{proof}
Let  $0<r<R$. Let $x \in [0,1]$ and write $\bm{x} = (x,f(x))$. The number of $r$-columns of the form $[nr,(n+1)r] \times \bb{R}$ for some integer $n$ which $Q(\bm{x},R)$ intersects is at most $R/r+2$. Since 
\[
|f(y) - f(z)| \leq [f]_1|y-z|\leq [f]_1 r
\]
for all $y,z \in [nr,(n+1)r]$, the number of $r$-squares $G_f$ intersects in each $r$-column is at most $[f]_1 r/r +2 = [f]_1+2$. Hence
\[
N_r'(Q(\bm{x},R) \cap G_f) \leq \bigg(\frac{R}{r} + 2\bigg)([f]_1+2) \leqs \frac{R}{r},
\]
from which we conclude $\ad G_f \leq 1$. For the lower bound, notice that for $\theta \in (0,1)$ Theorem \ref{specTyp} (i) implies $\ad^\theta G_f = 1$. Hence, 
\[
1 =\ad^\theta G_f \leq \ad G_f \leq 1
\]
and equality must hold throughout.
\end{proof}

Our proof of Theorem \ref{adTyp} is very similar to that of Theorem \ref{specTyp}, however we include it for completeness. We follow the same strategy: we check the desired set is dense, then show that it is $G_\delta$.

\begin{lma} \label{adDense}
Let $\omega$ be a concave modulus of continuity satisfying 
\[
\lim_{t \to 0} \frac{\omega(t)}{t} = \infty.
\]
Then the set $\{f \in \lilmega \ | \ \ad G_f = 2 \}$ is dense in $(\lilmega, ||\cdot||_\omega)$.
\end{lma}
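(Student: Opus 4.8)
The plan is to follow the proof of Lemma~\ref{omgSpecDense} almost verbatim: given $f \in \lilmega$ and $\epsilon>0$, I would produce $\tilf \in \lilmega$ with $\|f-\tilf\|_\omega \leq \epsilon$ and $\ad G_{\tilf}=2$, so that the set in the statement meets every $\omega$-ball. The essential new point is that forcing a large \emph{Assouad dimension} (as opposed to a large Assouad spectrum at one fixed exponent $\theta$) requires the graph to be maximally complicated at a pair of scales $r_k \ll R_k$ with $R_k/r_k \to \infty$, \emph{and} all of that complexity must sit inside a single ball of radius $R_k$. So rather than the drifting staircase of Lemma~\ref{omgSpecDense} I would splice in a \emph{bounded} zig-zag confined to a thin horizontal band. (Note that when $\eta=\lim \log\omega(t)/\log t<1$ the conclusion already follows from Theorem~\ref{specTyp}(ii), so the substance is the case $\eta=1$, e.g. $\omega(t)=t(1+|\log t|)$; the construction below works uniformly regardless.)

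\textbf{Construction.} Pick a decreasing sequence $\epk \to 0$ with each $\epk$ small relative to $\epsilon$ and $\sum_k \epk \leq 1/4$. Since $\omega(t)/t \to \infty$, for each $k$ I can choose $r_k \searrow 0$ (also small enough to invoke the local flatness of $f$, as in~(\ref{cond41}), on intervals of length comparable to $a_k$ below) so that, putting $a_k := \epk\,\omega(r_k)$, we have $a_k \to 0$ while $a_k/r_k \to \infty$; this is the one place the hypothesis on $\omega$ is used, and it is indispensable. Take pairwise disjoint intervals $I_k=[y_k,z_k]\subseteq[0,1]$ with $L_k:=|I_k|$ a suitably large constant multiple of $a_k$ and $\sum_k L_k\leq 1$. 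Set $\tilf=f$ off $\bigcup_k I_k$; on $I_k$, over the outer quarters interpolate linearly from $f(y_k)$, resp.\ $f(z_k)$, into a horizontal band $B_k$ of height $\approx a_k$ situated near $f(y_k)$, and over the middle half insert a continuous zig-zag that is affine on each of the successive length-$r_k$ sub-intervals, has oscillation $\approx a_k$ on each of them, and stays inside $B_k$.

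\textbf{Verification.} The norm estimates are exactly those of Lemma~\ref{omgSpecDense}: on each length-$r_k$ sub-interval $\tilf$ is affine with oscillation $\approx a_k=\epk\omega(r_k)$, so Lemma~\ref{linIneq2} gives $\omega$-ratio $\leqs\epk$ there; neighbouring sub-intervals give $\leqs\epk$ by splitting at the shared endpoint and using monotonicity of $\omega$; points of $I_k$ at distance $\geq r_k$ give $|\tilf(x)-\tilf(y)|\leqs a_k\leq\epk\omega(|x-y|)$ because $\tilf$ stays in $B_k$; and the transition pieces have $\omega$-ratio $\leqs a_k/\omega(r_k)\leqs\epk$, using $|f(y_k)-f(z_k)|\leq\epk\omega(L_k)$ (valid since $f\in\lilmega$ and $L_k$ is small). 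Summing over the $I_k$ via the triangle inequality — with $h:=f-\tilf$ vanishing at every $y_k$ and $z_k$ — yields $\|f-\tilf\|_\omega\leq\epsilon$, and the same estimates give $\tilf\in\lilmega$ since for $|x-y|\leq t$ only intervals $I_k$ with $k$ large, hence $\epk$ small, are relevant.

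\textbf{Assouad dimension, and the main obstacle.} Let $\bm{x_k}$ be the point of $G_{\tilf}$ over the centre of $I_k$ and set $R_k$ to be a suitable constant multiple of $a_k$, so $R_k/r_k \approx a_k/r_k \to \infty$ and, $L_k$ being large enough, $B^o(\bm{x_k},R_k)$ contains the part of $G_{\tilf}$ lying over $\geqs R_k/r_k$ consecutive columns $[nr_k,(n+1)r_k]\times\bb{R}$ inside the zig-zag region. In each such column $G_{\tilf}$ is a single affine segment of vertical extent $\approx a_k$, hence meets $\geqs a_k/r_k\geqs R_k/r_k$ mesh squares, so $N_{r_k}(B^o(\bm{x_k},R_k)\cap G_{\tilf}) \geqs (R_k/r_k)^2$. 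Since $R_k/r_k\to\infty$, for any $\beta<2$ this exceeds $C(R_k/r_k)^\beta$ for all large $k$ and every $C$, whence $\ad G_{\tilf}\geq\beta$; letting $\beta\to2$ and using $\ad G_{\tilf}\leq2$ trivially gives $\ad G_{\tilf}=2$. The step I expect to need the most care is the one genuinely new here: arranging the zig-zag to oscillate as much as the $\omega$-norm budget permits at scale $r_k$ while keeping it trapped in a band of height $\approx R_k$, and then checking $[\tilf]_\omega= O(\epk)$ for \emph{all} pairs of points (inside a tooth, across neighbouring teeth, across the full band, across the transition pieces, and across distinct $I_k$). This is the tension absent from the Assouad-spectrum construction, though once the parameters are chosen it runs parallel to Lemma~\ref{omgSpecDense}.
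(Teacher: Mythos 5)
Your proposal is correct and follows essentially the same route as the paper: the paper's own construction for Lemma \ref{adDense} is likewise a zig-zag confined to a band (of height $R_k=(2k+1)r_k\leq \epsilon_k\omega(r_k)$, teeth of width $r_k$, transition pieces into $f$), with the hypothesis $\omega(t)/t\to\infty$ used exactly as you use it to force $R_k/r_k\to\infty$ while Lemma \ref{linIneq2} keeps the $\omega$-norm cost of order $\epsilon_k$, and the same $(R_k/r_k)^2$ box count at scales $r_k<R_k$ gives $\ad G_{\tilde f}=2$. The differences (band height $\epsilon_k\omega(r_k)$ versus $(2k+1)r_k$, placement of the transition pieces) are cosmetic.
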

\begin{proof}
Let $f \in \lilmega$ and $\epsilon>0$. We may assume $\epsilon$ is small enough such that $\omega(\epsilon) \leq 1$, so that $\omega(t) \leq 1$ for all $t \leq \epsilon$. Our aim is to show that there exists $\tilde{f} \in \lilmega$ with $||f-\tilde{f}||_\omega \leq \epsilon$ and $\ad G_{\tilde{f}} = 2$.    

Let $(\epsilon_k)$ be a decreasing sequence of positive real numbers with $\sum_k \epsilon_k \leq 1/8$ and $\epsilon_k \leq 24^{-1}\epsilon$. Let $(r_k)$ be a positive and strictly decreasing sequence where $\omega(r_k) \leq \epk$ and 
\[
\frac{\omega(r_k)}{r_k} \geq \frac{2k+1}{\epk}
\]
for all $k$, using the fact that $\omega$ is increasing and $\omega(t)/t \to \infty$ as $t \to 0$. Additionally, since $f \in \lilmega$ we may choose $r_k$ such that if $x, y \in [0,1]$ are distinct with $|x-y| \leq \omega(r_k)$, then
\begin{equation} \label{cond21}
\Fo{x}{y} \leq \epsilon_k.   
\end{equation}
It follows that for such $x, y$
\begin{equation} \label{cond22}
|f(x)-f(y)| \leq \omega(\epk)\epk \leq \epsilon_k.    
\end{equation}
Finally, we define $R_k = (2k+1)r_k$ and note that $R_k \searrow 0$ with $R_k \leq \omega(r_k)\epk$. 

Consider the intervals $I_1 = [0, 2R_1]$ and $I_k = [4(R_1+...+R_{k-1}), 4(R_1+...+R_{k-1})+2R_k]$ for $k \geq 2$; observe that the $I_k$ are disjoint and $\cup_k I_k \subseteq [0,1]$. Denote the left endpoint of $I_k$ by $y_k$ and the right endpoint by $z_k$. We define a new function $\tilde{f}$ by modifying $f$ over $I_k$ so that $\tilf$ coincides with $f$ on $[0,1] \setminus \cup_k I_k$.

\begin{figure}[ht] 
\centering
\includegraphics[width=0.66\textwidth]{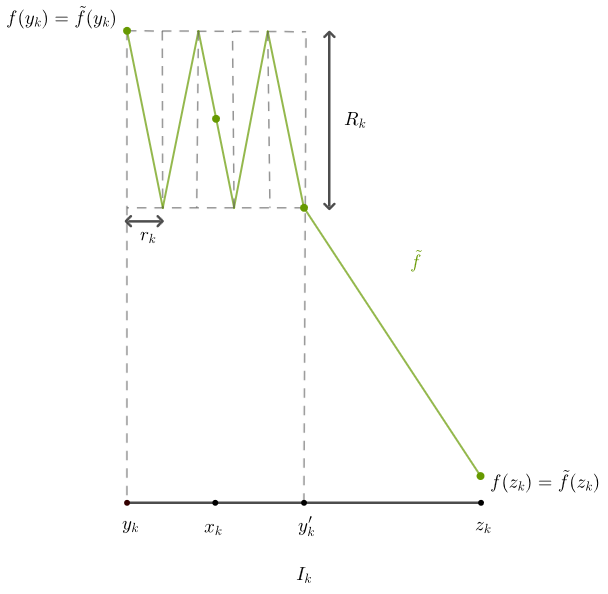}
\caption{$\tilf$ over $I_k$}
\label{adFig}
\end{figure}

Let $k \in \bb{N}$ and assume without loss of generality that $f(y_k) \geq f(z_k)$. Let $y_k'= (y_k+z_k)/2$ and $x_k = (y_k+y_k')/2$. Define $\tilde{f}$ as follows. Let $\tilde{f}(y_k) = f(y_k)$, $\tilde{f}(z_k)=f(z_k)$, $\tilde{f}(x_k) = f(y_k)-R_k/2$, $\tilde{f}(y_k') = f(y_k)-R_k$ and interpolate linearly between $y_k'$ and $z_k$. Since $|y_k'-y_k|=R_k$, we divide $[y_k,y_k']$ into $2k+1$ sub-intervals of length $r_k$. Furthermore, let $\tilde{f}$ be such that:
\begin{itemize}
\item $\tilde{f}$ is affine over each sub-interval;

\item the oscillation of $\tilde{f}$ over each sub-interval equals $R_k$;

\item $\tilde{f}(x) \in [\tilde{f}(y_k), \tilde{f}(y_k')]$ for all $x \in [y_k,y_k']$,
\end{itemize}
see Figure \ref{adFig}. Now for distinct $x,y \in [y_k,y_k']$, if $|x-y| > r_k$ then
\[
\tFo{x}{y} \leq \frac{R_k}{\omega(r_k)} \leq \epk;
\]
otherwise, $|x-y| \leq r_k$ and $x,y$ belong to either the same sub-interval or two neighbouring ones. In the former case, Lemma \ref{linIneq2} implies
\[
\tFo{x}{y} \leq \frac{R_k}{\omega(r_k)} \leq \epk;
\]
whereas for the latter case, denote by $z$ the shared endpoint of the two neighbouring sub-intervals and assume $x<z<y$. Then since $\omega$ is increasing and $|x-y|= |x-z|+|z-y|$, we have
\[
\max\{\omega(|x-z|), \omega(|z-y|)\} \leq \omega(|x-y|)
\]
and Lemma \ref{linIneq2} again yields
\begin{align*}
\tFo{x}{y} &\leq \frac{|\tilf(x)-\tilf(z)|+ |\tilf(z)-\tilf(y)|}{\omega(|x-y|)} \\ 
&\leq \tFo{x}{z} + \tFo{z}{y} \\
& \leq \frac{R_k}{\omega(r_k)} + \frac{R_k}{\omega(r_k)}  \\
& \leq 2\epk.
\end{align*}
Note that $|y_k-z_k|=2R_k \leq \omega(r_k)$, so by (\ref{cond21})
\[
|\tilf(y_k')-\tilf(z_k)| \leq |f(y_k)-f(z_k)| + R_k \leq \epsilon_k \omega(2R_k) + R_k.
\]
Lemma \ref{linIneq2} then shows that for $x, y \in [y_k',z_k]$ distinct
\begin{align*}
\tFo{x}{y} &\leq \tFo{y_k'}{z_k} \\
&\leq \frac{ \epsilon_k \omega(2R_k) + R_k}{\omega(R_k)}  \\
&\leq \frac{2\epk \omega(R_k)}{\omega(R_k)} + \frac{R_k}{\omega(r_k)} \\
&\leq 2\epk + \epk \\
&=3\epk,
\end{align*}
where we used that $\omega$ is concave and increasing.

Similarly if $x,y \in I_k$ with $x < y_k'<y$, using the same argument we see that
\begin{align*}
\tFo{x}{y} &\leq \frac{|\tilf(x)-\tilf(y_k')|+ |\tilf(y_k')-\tilf(y)|}{\omega(|x-y|)} \\ 
&\leq \tFo{x}{y_k'} + \tFo{y_k'}{y} \\
& \leq 2\epk + 3\epk  \\
& = 5\epk.
\end{align*}
Hence for $x,y \in I_k$
\begin{equation} \label{cond23}
\tFo{x}{y} \leq  5\epsilon_k,
\end{equation}
and since we assumed $\omega(\epsilon) \leq 1$
\begin{equation} \label{cond24}
|\tilf(x) - \tilf(y)| \leq 5\epsilon_k \omega(|x-y|) \leq 5\epsilon_k \omega(\epsilon) \leq  5\epsilon_k.    
\end{equation}

We check $||f-\tilde{f}||_\omega \leq \epsilon$. Let $x \in [0,1]$ and suppose $f(x) \neq \tilde{f}(x)$, so we must have $x \in I_k$ for some $k$. Then
\[
|f(x) - \tilf(x)| \leq |f(x) - f(z_k)| + |\tilf(z_k) - \tilf(x)| \leq \epsilon_k + 5\epsilon_k \leq \frac{\epsilon}{2}
\]
by (\ref{cond22}) and (\ref{cond24}). Hence $||f-\tilde{f}||_\infty \leq \epsilon/2$. For $[f-\tilf]_\omega$, write $h = f-\tilf$. Let $x, y \in [0,1]$, where we suppose $f(x) \neq \tilde{f}(x)$ and $f(y) \neq \tilde{f}(y)$. Then there must exist $k,l \in \bb{N}$ such that $x \in I_k$ and $y \in I_l$. Assume further that $k<l$. Then by (\ref{cond21}) and (\ref{cond23})
\begin{align*}
\Ho{x}{y} &= \frac{|h(x)- h(z_k) + h(y_l)-h(y)|}{\omega(|x-y|)} \\
& \leq \frac{|f(x) - f(z_k)|}{\omega(|x-y|)} + \frac{|\tilf(z_k) - \tilf(x)|}{\omega(|x-y|)} +  \frac{|f(y_l) - f(y)|}{\omega(|x-y|)} +  \frac{|\tilf(y) - \tilf(y_l)|}{\omega(|x-y|)}  \\
& \leq \Fo{x}{z_k} + \tFo{z_k}{x} + \Fo{y_l}{y} + \tFo{y}{y_l}\\
& \leq \epk + 5\epk + \epsilon_l + 5\epsilon_l \\
& \leq \frac{\epsilon}{2},
\end{align*}
since $h(z_k)=h(y_l)=0$. Other cases where, for example, $k=l$ or $\tilf(x)=f(x)$ are similar. Hence $[f-\tilf]_\omega= [h]_\omega \leq \epsilon/2$ and 
\[
||f-\tilf||_\omega= ||f-\tilf||_\infty + [f-\tilf]_\omega \leq \frac{\epsilon}{2}+\frac{\epsilon}{2}=\epsilon
\]
as desired. 

Next we check $\tilf \in \lilmega$. Let $t \in (0,L_1]$ and define $m$ to be the largest integer such that $t \leq 2R_m$. Let $x,y \in [0,1]$ with $x<y$ and $|x-y| \leq t$. Suppose $f(x) \neq \tilde{f}(x)$ and $f(y) \neq \tilde{f}(y)$. There must exist $k,l \in \bb{N}$ such that $x \in I_k$ and $y \in I_l$, and we suppose further that $k<l$. Since $2R_k \leq |x-y| \leq 2R_m$, we must have $k \geq m$. Again by (\ref{cond21}) and (\ref{cond23})
\begin{align*}
\tFo{x}{y} &\leq \frac{|\tilf(x)-\tilf(z_k)|+|\tilf(z_k)-\tilf(y_l)|+|\tilf(y_l)-\tilf(y)|}{\omega(|x-y|)} \\
&\leq \tFo{x}{z_k} + \tFo{z_k}{y_l} + \tFo{y_l}{y} \\
&\leq 5\epk + \epsilon_m + 5\epsilon_l \\
&\leq 11\epsilon_m \\
&\to 0
\end{align*}
as $t \to 0$. Other cases follow similarly (and are in fact simpler) and we conclude that $\tilf \in \lilmega$. 

Finally we compute $\ad G_{\tilf}$. Write $\bm{x_k}=\fx{\tilf}{x_k}$ and let $Q_k = Q(\bm{x_k},R_k)$. Then each $Q_k$ intersects at least
\[
\frac{1}{2} \cdot \frac{R_k}{r_k}
\]
columns of the form $[nr_k,(n+1)r_k] \times \bb{R}$ for some $n \in \bb{Z}$ in the $r_k$-mesh grid. By construction, in each such column $G_{\tilf} \cap Q_k$ intersects at least
\[
\frac{1}{2} \cdot\frac{R_k}{r_k} 
\]
squares of side-length $r_k$ in the mesh grid. Summing these up,
\[
N_{r_k}(Q_k \cap G_{\tilf}) \geqs \bigg(\frac{R_k}{r_k}\bigg)^2.
\]
Since
\[
\frac{R_k}{r_k} = 2k+1 \to \infty
\]
as $k \to \infty$, we get $\ad G_{\tilf} = 2$, which concludes the proof.
\end{proof}

\begin{lma} \label{adGelta}
Let $\omega$ be a modulus of continuity. The set $\{f \in \lilmega \ | \ \ad G_f = 2 \}$ is $G_\delta$.
\end{lma}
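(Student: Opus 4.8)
The plan is to follow the proof of Lemma~\ref{specGdelta} almost verbatim, the only structural difference being that the Assouad dimension involves \emph{two} free scale parameters $0<r<R$ rather than the single parameter $r$ tied to $R=r^\theta$. First I would record that $\ad G_f\le 2$ for every $f$, since $G_f\subseteq\bb{R}^2$; hence $A:=\{f\in\lilmega:\ad G_f=2\}$ coincides with $\{f\in\lilmega:\ad G_f\ge 2\}$. Unwinding the definition of $\ad$ exactly as in Lemma~\ref{specGdelta}, this set can be written
\[
A=\bigcap_{n\in\bb{N}}\ \bigcap_{C\in\bb{N}_0}\ \bigcup_{x\in[0,1]}\ \bigcup_{0<r<R<\infty}\Big\{f\in\lilmega:\ N_r\big(B^o((x,f(x)),R)\cap G_f\big)\ge C\Big(\tfrac{R}{r}\Big)^{2-\frac1n}\Big\}.
\]
The equivalence with $A$ is elementary in the same way as there: if $\ad G_f<2$, choose $\beta<2$ and $C>0$ witnessing this, and then for every $n$ with $2-\tfrac1n>\beta$ the integer $\lceil C\rceil+1$ fails the displayed inequality for all $x$ and all $r<R$, because $(R/r)^{\beta}<(R/r)^{2-1/n}$ when $R>r$; conversely, if $f$ fails to lie in the inner union for some $n$ and $C$, then $N_r(\,\cdot\,)\le C(R/r)^{2-1/n}$ for all $0<r<R$ and all $x$, so $\ad G_f\le 2-\tfrac1n<2$.

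It therefore suffices to show that, for each fixed $n,C,x,r,R$, the innermost set is open in $(\lilmega,||\cdot||_\omega)$; and here the argument of Lemma~\ref{specGdelta} applies verbatim with $r^\theta$ replaced by $R$. Given $f$ in that set, fix a maximal $r$-packing $\{B((x_i,f(x_i)),r)\}_{i\in I}$ of $B^o((x,f(x)),R)\cap G_f$ with $|I|\ge C(R/r)^{2-1/n}$; let $h_i>0$ be the vertical distance from $(x_i,f(x_i))$ to the boundary of $B^o((x,f(x)),R)$, set $h=\min_{i\in I}h_i$ and $d=\min_{i\ne j}|(x_i,f(x_i))-(x_j,f(x_j))|-2r>0$, and put $\xi=\tfrac12\min\{h,d\}$. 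If $g\in\lilmega$ satisfies $||f-g||_\infty<\xi$, then for each $i$ the vertical position of the point inside the chord of the ball above $x_i$ is shifted by less than $2\xi\le h\le h_i$ when the centre and point are updated, so $(x_i,g(x_i))\in B^o((x,g(x)),R)\cap G_g$, while the mutual distances change by less than $2\xi\le d$, so the balls $B((x_i,g(x_i)),r)$ remain pairwise disjoint. Hence $\{B((x_i,g(x_i)),r)\}_{i\in I}$ is an $r$-packing and $N_r(B^o((x,g(x)),R)\cap G_g)\ge|I|\ge C(R/r)^{2-1/n}$; since $||\cdot||_\infty\le||\cdot||_\omega$, the whole $\omega$-ball $B^o(f,\xi)$ lies in the set, which is therefore open. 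Thus $A$ is a countable intersection of open sets, i.e.\ $G_\delta$.

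I do not expect a genuine obstacle here: the only point needing a little care is the passage between the infimum in the definition of $\ad$ and the displayed countable intersection, which is settled by the elementary comparison $(R/r)^\beta<(R/r)^{2-1/n}$ for $R>r$ noted above. As a sanity check, the statement assumes nothing about $\omega$: when $\omega(t)/t$ stays bounded, Proposition~\ref{lip} gives $A=\emptyset$, and the displayed formula reproduces this (already the $n=1$ layer is empty), so the conclusion holds vacuously.
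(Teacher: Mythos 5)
Your proposal is correct and follows exactly the route the paper intends: the paper only sketches this lemma by writing the same countable intersection (over $\epsilon$ and $C$) of unions over $x$, $R$ and $r\in(0,R)$ and deferring to the openness argument of Lemma~\ref{specGdelta}, which you carry out verbatim with $r^\theta$ replaced by $R$. Your added details (the $\ad G_f\le 2$ reduction, the comparison $(R/r)^\beta\le(R/r)^{2-1/n}$, and the perturbation estimate with $\xi=\tfrac12\min\{h,d\}$) are precisely the intended filling-in and are sound.
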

\begin{proof}[Proof sketch]
We consider the set
\[
\bigcap_{\epsilon >0} \bigcap_{C\geq0}\bigcup_{x \in [0,1]} \bigcup_{R>0} \bigcup_{r \in (0,R)} \bigg\{f \in \lilmega : N_r(B^o((x,f(x)),R) \cap G_f) \geq C\bigg(\frac{R}{r}\bigg)^{2-\epsilon} \bigg\},
\]
which can be shown to be $G_\delta$ in a similar way as in the proof of Lemma \ref{specGdelta}.
\end{proof}

\begin{proof}[Proof of Theorem \ref{adTyp}]
Part (i) follows from Proposition \ref{lip}. Lemma \ref{adDense} and \ref{adGelta} yield the conclusion in part (ii).
\end{proof}

\bibliographystyle{abbrv}
\bibliography{bib.bib}

\end{document}